\DeclareMathAlphabet{\chan}{T1}{pzc}{mb}{it}
\newcommand{\mc}{\mathcal}
\newcommand{\set}[1]{\{#1\}}
\newtheorem{thm}{Theorem}
\newtheorem{pbm}{Problem}
\theoremstyle{definition}
\newtheorem{example}{Example}
\newenvironment{Proof}{\begin{trivlist} \item[] {\bf Proof.}}{\hspace*{0pt}\hfill $\Box$\end{trivlist}}
\newenvironment{iproof}{\begin{trivlist} \item[] {\quad}}{\hspace*{0pt}\hfill $\Box$\end{trivlist}}
\newsavebox{\Theoremtype}
\newsavebox{\Theoremlabel}
\newenvironment{proofref}[2]
{\sbox{\Theoremtype}{\textbf{#1}}
 \sbox{\Theoremlabel}{\textbf{\ref{#2}}}
 \begin{trivlist} \item[] {\bf Proof of \usebox{\Theoremtype} \usebox{\Theoremlabel}.}\rm}
{\hspace*{0pt}\hfill$\Box$\end{trivlist}}
\newtheoremstyle{break}
{\topsep}
{\topsep}
{\it}
{}
{}
{}
{ }
{\thmname{\textbf{#1}}\thmnumber{ \textbf{#2.}}}
\theoremstyle{break}
\newtheoremstyle{ref}
{\topsep}	
{\topsep}	
{\it}
{}
{}
{}
{ }
{\thmname{{\bfseries#1}}\thmnumber{ \bfseries#2\thmnote{\rm #3}\bfseries .}}
\theoremstyle{ref}
\newtheorem{lem}[thm]{Lemma}
\newtheorem{cor}[thm]{Corollary}
\newtheoremstyle{nnref}
{\topsep}	
{\topsep}	
{\it}
{}
{}
{}
{ }
{\thmname{\textbf{#1}\thmnote{\textrm{ #3}}\textbf{.}}}
\theoremstyle{nnref}
\newtheorem{defn}{Definition}
\begin{document}
\sloppy
\title{Productively Lindelöf spaces may all be $D$}
\author{Franklin D. Tall\makebox[0cm][l]{$^1$}}

\footnotetext[1]{Research supported by grant A-7354 of the Natural Sciences and Engineering Research Council of Canada.\vspace*{1pt}}
\maketitle

\begin{abstract}
We give easy proofs that a) the Continuum Hypothesis implies that if
the product of $X$ with every Lindelöf space is Lindelöf, then $X$ is
a $D$-space, and b) Borel's Conjecture implies every Rothberger space
is Hurewicz.
\end{abstract}

\renewcommand{\thefootnote}{}
\footnote
{\parbox[1.8em]{\linewidth}{$(2010)$ Mathematics Subject Classification. 54D20, 54B10, 54D55; Secondary 54A20, 03F50.}\vspace*{3pt}}
\renewcommand{\thefootnote}{}
\footnote
{\parbox[1.8em]{\linewidth}{Key words and phrases: Productively Lindel\"of,  $D$-space, projectively $\sigma$-compact, Menger, Hurewicz.}}

\section{Introduction}

\begin{defn}
A topological space is a \emph{$D$-space} if for every assignment $f$ from
points to open neighborhoods of them, there is a closed discrete $D \subseteq
X$ such that $\{ f(x) : x \in D \}$ covers $X$.
\end{defn}

$D$-spaces are currently a hot topic in set-theoretic topology --- see
the two recent surveys \cite{Eisworth2007}, \cite{Gruenhage2009}. For
the non-specialist, observe that a $T_1$ space is compact if and only
if it is a countably compact $D$-space. The primary question of interest
is whether every Lindelöf space is a $D$-space
\cite{vanDouwenPfeffer}. We shall assume all spaces are $T_3$.

\emph{Productively Lindelöf} spaces, i.e. spaces such that their product
with every Lindelöf space is Lindelöf, have been studied in connection with
two classic problems of E. A. Michael:

\begin{pbm}
Is the product of a Lindelöf space with the space of irrationals Lindelöf?
\end{pbm}

\begin{pbm}
If $X$ is productively Lindelöf, is $X^\omega$ Lindelöf (we
say $X$ is \emph{powerfully Lindelöf})?
\end{pbm}

For an extensive list of references concerning these problems see
\cite{TallMenger}. The primary result of note is due to Michael,
being implicitly proved in \cite{Michael1971}. It is explicitly stated
and proved in \cite{Alster1987}:

\begin{lem}\label{lem1}
The Continuum Hypothesis implies that productively Lindelöf
metrizable spaces are $\sigma$-compact.
\end{lem}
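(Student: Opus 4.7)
The plan is to argue by contradiction: assume $X$ is metrizable and productively Lindelöf but not $\sigma$-compact, and produce, using CH, a Lindelöf space $M$ such that $X \times M$ is not Lindelöf, contradicting productive Lindelöfness. First I would observe that $X$ is itself Lindelöf (since $X \times \{*\}$ is Lindelöf by hypothesis), and hence, being metrizable, is separable and second countable; this lets me view $X$ as a subspace of a Polish space $\widetilde{X}$, namely its completion. The failure of $\sigma$-compactness means $X$ is not a $K_\sigma$ subset of $\widetilde{X}$, which provides enough ``spread'' of $X$ inside $\widetilde{X}$ to defeat any countable subcover of a well-chosen product cover.

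The core of the argument is Michael's CH construction of a Lindelöf refinement $M$ of $\mathbb{R}$, obtained by isolating a carefully chosen concentrated set of irrationals of size $\aleph_1$. CH enters twice: first to enumerate in order type $\omega_1$ the countable families of compact subsets of $\widetilde{X}$ that could potentially witness $\sigma$-compactness of a relevant portion of the product, and second to keep $|M| = \aleph_1$ so that a transfinite recursion of length $\omega_1$ closes off. The diagonalization selects, at stage $\alpha$, a new isolated point of $M$ that escapes the $\alpha$-th enumerated family, arranged so that (a) $M$ itself is Lindelöf (because the isolated set is concentrated on a meager/negligible subset and ``most'' of $M$ retains the usual topology) and (b) there is an explicit open cover of $M \times X$, built from the enumerated compacta and the non-$\sigma$-compactness of $X$, that admits no countable subcover.

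The main obstacle is the simultaneous verification of (a) and (b): at each stage of the recursion one must pick the next isolated point to defeat one more candidate countable subcover of $M \times X$, while being careful not to destroy Lindelöfness of $M$ itself; this is exactly the delicate bookkeeping in Michael's 1971 argument and its refinement by Alster, and both uses of CH appear essential there. Once $M$ is in hand, the contradiction is immediate, since $M$ is Lindelöf and $X$ is productively Lindelöf yet $M \times X$ is not.
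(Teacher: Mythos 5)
You have the right global shape---assume $X$ is not $\sigma$-compact and use CH to manufacture a Lindel\"of space whose product with $X$ is not Lindel\"of---but as written this is a plan plus a citation, not a proof. The construction of $M$ and the simultaneous verification of your (a) and (b), which you yourself flag as ``the main obstacle,'' are exactly the content of the lemma, and you defer them wholesale to ``Michael's 1971 argument and its refinement by Alster.'' Since the statement being proved \emph{is} that argument, nothing has actually been established: the one step that had to be supplied is the step left out.

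Moreover, the sketch you do give points at the wrong construction and at a difficulty that does not arise. Isolating a concentrated set of \emph{irrationals} inside $\mathbb{R}$ is the classical Michael space aimed at products with $\mathbb{P}$; it does not engage an arbitrary non-$\sigma$-compact separable metrizable $X$ (a Bernstein set is not $\sigma$-compact yet contains no closed copy of $\mathbb{P}$, so non-Lindel\"ofness of $M \times X$ cannot be extracted from $M \times \mathbb{P}$). The space one needs is built from $X$ itself: embed $X$ in $[0,1]^{\aleph_0}$ and let $Y = [0,1]^{\aleph_0} - X$. Second countability of the cube plus CH gives $\aleph_1$ open sets about $Y$ that are cofinal (under reverse inclusion) among all open sets about $Y$, hence a strictly decreasing sequence $\{G_\beta\}_{\beta<\omega_1}$ of $G_\delta$'s about $Y$ with the same cofinality property. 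Each complement $[0,1]^{\aleph_0} - G_\beta$ is an $F_\sigma$ in a compact space contained in $X$, hence a $\sigma$-compact subset of $X$; since $X$ is not $\sigma$-compact one can pick $p_\beta \in (G_\beta - G_{\beta+1}) \cap X$. Then $Z = Y \cup \{p_\beta : \beta < \omega_1\}$, with each $p_\beta$ isolated and the subspace topology otherwise, is Lindel\"of outright, because every open set about $Y$ contains some $G_\beta$ and therefore all $p_\gamma$ with $\gamma \ge \beta$; and $X \times Z$ is not Lindel\"of because $\{\langle p_\beta, p_\beta\rangle : \beta < \omega_1\}$ is an uncountable closed discrete set. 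In particular there is no stage-by-stage defeat of candidate countable subcovers of a cover of $M \times X$, and no tension between keeping $M$ Lindel\"of and spoiling the product: the diagonalization is against open neighbourhoods of $Y$, CH is used only to enumerate them, and the failure of Lindel\"ofness is witnessed by a closed discrete set rather than by a cover one must protect through a recursion. Your proposal neither constructs the witness space nor proves either of the two properties it needs of it.
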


Our tools include selection principles and topological games. As a byproduct,
we obtain an easy proof of the consistency of every Rothberger space being
Hurewicz (see definitions below).

Section \ref{sec2} gives a self-contained easy proof of the result of
the title. Sections \ref{sec3} and \ref{sec4} are more for specialists, varying
the themes of Section \ref{sec2}. Section \ref{sec5} contains a short proof
of b) of the abstract. Section \ref{sec6} is built around a diagram of the
relationships among the properties we have discussed.

\section{$CH$ implies productively Lindelöf spaces are $D$}\label{sec2}

We shall give a short, reasonably elementary proof that:

\begin{thm}\label{thm2}
The Continuum Hypothesis implies productively Lindelöf spaces are $D$.
\end{thm}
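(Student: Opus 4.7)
The plan is to combine Lemma~\ref{lem1} with Aurichi's theorem that every Menger space is a $D$-space; the connecting step is to show that, under CH, every productively Lindel\"of space is Menger.

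First I would record the easy fact that productive Lindel\"of-ness passes to continuous images: if $g\colon X\to Y$ is a continuous surjection and $Z$ is Lindel\"of, then $Y\times Z$ is the image of $X\times Z$ under $g\times\mathrm{id}_Z$ and hence Lindel\"of. Next I would invoke the Hurewicz-type characterization that a Lindel\"of space $X$ is Menger iff for every continuous $f\colon X\to\omega^\omega$ the image $f(X)$ is non-dominating. Given such an $f$, the image $f(X)$ is productively Lindel\"of by the previous observation and is separable metrizable, so by Lemma~\ref{lem1} it is $\sigma$-compact. Each compact subset of $\omega^\omega$ is bounded by a single function, so $f(X)$ is $\sigma$-bounded; diagonalizing against countably many functions then shows that $f(X)$ is non-dominating. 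Hence $X$ is Menger, and Aurichi's theorem delivers the $D$-property.

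The step I expect to be most delicate is justifying the Hurewicz characterization at the level of arbitrary Lindel\"of spaces rather than separable metric ones. If no off-the-shelf version suffices, I would argue directly: given a sequence of open covers $\{\mathcal{U}_n\}$, use Lindel\"of-ness to thin each to a countable subcover $\{U^n_k\}_k$, define $\varphi\colon X\to\omega^\omega$ by $\varphi(x)(n)=\min\{k:x\in U^n_k\}$ (continuous by construction), and use $\sigma$-compactness of $\varphi(X)$ to extract the finite selections $\mathcal{V}_n\subseteq\mathcal{U}_n$ required by the Menger property. Once $X$ is shown to be Menger by either route, Aurichi's theorem finishes the argument.
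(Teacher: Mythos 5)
Your main route is essentially the paper's argument with one intermediate notion swapped out. The paper observes (as you do) that continuous images of productively Lindel\"of spaces are productively Lindel\"of, applies Lemma \ref{lem1} to every separable metrizable image to get projective $\sigma$-compactness, and then quotes Arhangel'ski\u\i{}'s Lemma \ref{lem4} (projectively $\sigma$-compact Lindel\"of spaces are Menger) before finishing, exactly as you do, with Aurichi's Lemma \ref{lem3}. You instead apply Lemma \ref{lem1} only to images in $\omega^\omega$, note that $\sigma$-compact subsets of $\omega^\omega$ are $\sigma$-bounded hence non-dominating, and invoke the characterization that a Lindel\"of space is Menger iff all its continuous images in $\omega^\omega$ are non-dominating. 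That characterization is in fact available for Lindel\"of $T_3$ spaces: it is the ``projectively Menger'' result of Bonanzinga--Cammaroto--Matveev \cite{BonCamMat} alluded to in Section \ref{sec3}, so if you cite it your argument closes, and it even uses slightly less than the paper does (only boundedness properties of images in $\omega^\omega$, rather than $\sigma$-compactness of all separable metrizable images).

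The genuine problem is in your fallback ``direct'' justification of that characterization: the map $\varphi(x)(n)=\min\{k : x\in U^n_k\}$ is \emph{not} continuous. The set $\{x : \varphi(x)(n)=m\}$ equals $U^n_m\setminus(U^n_0\cup\dots\cup U^n_{m-1})$, a difference of open sets, which need not be open; for instance with $X=\mathbb{R}$, $U^0_0=(0,2)$, $U^0_1=(-1,1)$, the set where the minimum is $1$ contains $(-1,0]$, so $\varphi$ is merely Borel. This is precisely the delicate point, and the correct repair is the refinement lemma the paper uses in its proof of Lemma \ref{lem4} (5.1.J(e) of \cite{Engelking1989}): for a Lindel\"of regular space $X$ and an open cover $\mathcal{U}$ there are a continuous $f:X\to Y$ with $Y$ separable metrizable and an open cover $\mathcal{V}$ of $Y$ such that $\{f^{-1}(V):V\in\mathcal{V}\}$ refines $\mathcal{U}$; taking the diagonal product of the maps associated to the countably many covers produces the continuous map to a separable metrizable (equivalently, after composing further, to an $\omega^\omega$-valued) image on which your $\sigma$-compactness and diagonalization argument can legitimately be run. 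With that substitution, or simply with the citation of \cite{BonCamMat}, your proof is correct.
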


We shall prove Theorem \ref{thm2} by combining Lemma \ref{lem1}
with results of
Arhangel'ski\u\i{} \cite{Arhangelskii2000} and Aurichi \cite{Aurichi}. Theorem
\ref{thm2} is a considerable improvement over \cite{AurichiTall} and
\cite{TallNote}, in which additional assumptions of separability, first
countability, or sequentiality were required.

We require two definitions:

\begin{defn}[\cite{Arhangelskii2000}]
A space is \emph{projectively $\sigma$-compact} if its continuous image in
any separable metrizable space is $\sigma$-compact.
\end{defn}

\begin{defn}
A space $X$ is \emph{Menger} if whenever $\{ \mathcal{U}_n \}_{n < \omega}$
are open covers of $X$, there are finite subsets $\mathcal{V}_n$ of
$\mathcal{U}_n$, $n < \omega$, such that $\{ \bigcup \mathcal{V}_n :
n < \omega \}$ is an open cover.
\end{defn}

This latter concept was introduced by Hurewicz \cite{Hurewicz1925} and
has been studied under various names since then. In particular, some
confusion arises because Arhangel'ski\u\i{} calls this property
\emph{Hurewicz}. However, our terminology is generally accepted. A
breakthrough on the Lindelöf $D$-problem occurred when Aurichi
\cite{Aurichi} proved:

\begin{lem}\label{lem3}
Menger spaces are $D$.
\end{lem}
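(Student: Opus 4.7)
The plan is to invoke Hurewicz's game-theoretic characterization of the Menger property: $X$ is Menger if and only if player ONE has no winning strategy in the Menger game $G_{\mathrm{fin}}(\mathcal{O},\mathcal{O})$, where at round $n$ ONE plays an open cover $\mathcal{U}_n$ of $X$, TWO responds with a finite $\mathcal{V}_n \subseteq \mathcal{U}_n$, and TWO wins the play iff $\bigcup_{n<\omega} \mathcal{V}_n$ is a cover of $X$.

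Given an open neighborhood assignment $\phi : X \to \tau$, the first step is to design a strategy $\sigma$ for ONE whose covers consist of sets of the form $\phi(x)$, so that every legal finite response of TWO at stage $n$ naturally corresponds to a finite set of \emph{centers} $F_n \subseteq X$ via $\mathcal{V}_n = \{\phi(x) : x \in F_n\}$. The strategy should be arranged so that, at stage $n$, ONE refuses to offer $\phi(x)$ for any $x$ that accumulates onto the centers chosen in earlier rounds, and in addition $\sigma$ contributes small $T_3$-separating neighborhoods around the previously chosen centers. Because $\sigma$ is not winning, there is a play $(\mathcal{V}_n)_{n<\omega}$ against $\sigma$ in which $\bigcup_n \mathcal{V}_n$ covers $X$; then $D := \bigcup_n F_n$ satisfies $\{\phi(x) : x \in D\} \supseteq X$, and the separations built into $\sigma$ force $D$ to be closed discrete.

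The main obstacle is tuning $\sigma$ so that a TWO-winning play forces $D$ to be closed discrete rather than merely discrete. If some $p \in X$ were a non-isolated accumulation point of $D$, then by weaving into $\sigma$'s covers a shrinking open base at each potential cluster point (using regularity), ONE would force TWO either to leave $p$ uncovered by $\bigcup_n \mathcal{V}_n$, contradicting TWO's winning, or to select centers converging to $p$, violating the accumulation-avoidance clause of $\sigma$. This ``accumulation-blocking'' component of the strategy is the combinatorial heart of the argument, and is where I expect to invest the most care; once it is in place, the failure of $\sigma$ to be winning, guaranteed by the Menger hypothesis via Hurewicz's theorem, immediately delivers the required closed discrete kernel $D$.
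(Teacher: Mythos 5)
Your framework is the same as the paper's (Hurewicz's game characterization of Menger, covers built from the neighborhood assignment, centers extracted from TWO's finite selections), but the step you flag as ``the combinatorial heart'' --- the accumulation-blocking device --- is exactly where the proposal goes wrong, and the mechanism you sketch would not work. In the Menger game TWO chooses \emph{any} finite subfamily of ONE's cover, so ONE cannot ``force TWO either to leave $p$ uncovered or to select centers converging to $p$'': TWO can always cover $p$ by a set whose center is far from $p$. Weaving ``a shrinking open base at each potential cluster point'' into the covers is not a legal, implementable sequence of moves (there are in general uncountably many candidate cluster points, and nothing obliges TWO to pick the basic sets you inserted). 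Worse, if ONE's covers contain elements that are not of the form $\phi(x)$ (your separating neighborhoods), TWO may win using those elements, and then the play no longer yields a set $D$ with $\{\phi(x):x\in D\}$ covering $X$; while if ONE ``refuses to offer $\phi(x)$'' for some points without compensating, the move may fail to be an open cover at all, i.e.\ be illegal.

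The actual argument needs no regularity and no blocking of cluster points. ONE's $(n+1)$st move consists of the sets $\bigcup\{\phi(x):x\in S_0\cup\dots\cup S_n\cup S\}$, where $S_0,\dots,S_n$ are the centers already produced by TWO and $S$ ranges over finite sets disjoint from $\bigcup\{\phi(x):x\in S_0\cup\dots\cup S_n\}$; this is a cover because any uncovered point $p$ gives the admissible $S=\{p\}$. Since $X$ is Menger, this strategy is not winning, so some play defeats it; let $D=\bigcup_n S_n$. Then $\{\phi(x):x\in D\}$ covers $X$, and closed discreteness is automatic: for any $p\in X$ take the least $n$ with $p\in\phi(x)$ for some $x\in S_0\cup\dots\cup S_n$; by construction every center chosen at a later stage lies outside $\bigcup\{\phi(y):y\in S_0\cup\dots\cup S_n\}\supseteq\phi(x)$, so $\phi(x)$ is a neighborhood of $p$ meeting $D$ only in the finite set $S_0\cup\dots\cup S_n$. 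Together with $T_1$, this shows $D$ is closed discrete. So the missing idea is precisely the ``avoid the already $\phi$-covered region'' clause, which simultaneously keeps ONE's moves legal and makes every point of $X$ witness local finiteness of $D$; once you adopt it, your regularity machinery can be discarded.
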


Combining this with Arhangel'ski\u\i{}'s

\begin{lem}\label{lem4}
Projectively $\sigma$-compact Lindel\"{o}f spaces are Menger.
\end{lem}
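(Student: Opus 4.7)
The plan is to encode the given sequence of open covers as one continuous map from $X$ into a compact metric space, apply projective $\sigma$-compactness to decompose the image into countably many compact pieces, and then extract from each piece a \emph{finite} sub-selection via a tightness argument inside the infinite simplex. That tightness step is the heart of the proof: it is what converts a $\sigma$-compact image into the finite selection demanded by the Menger property.

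First I would use that $X$ is Lindel\"of to refine each $\mathcal{U}_n$ to a countable subcover $\{U_{n,k}\}_{k<\omega}$. Since a Lindel\"of $T_3$ space is normal, a standard construction (shrinking the cover twice, applying Urysohn, and telescoping) yields a continuous partition of unity $\{\phi_{n,k}\}_{k<\omega}$ subordinate to $\{U_{n,k}\}_k$: each $\phi_{n,k}\colon X\to[0,1]$ vanishes off $U_{n,k}$, and $\sum_k \phi_{n,k}(x)=1$ for every $x\in X$. The map $F\colon X\to Y:=[0,1]^{\omega\times\omega}$ defined by $F(x)(n,k)=\phi_{n,k}(x)$ is then continuous into the compact metric (hence separable metrizable) space $Y$.

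By projective $\sigma$-compactness, $F(X)=\bigcup_{m<\omega}K_m$ with each $K_m$ compact in $Y$. For any $n$ and $m$ the projection $\pi_n(K_m)$ is a compact subset of the simplex $\Delta=\{a\in[0,1]^\omega:\sum_k a_k=1\}$, and a standard tightness argument (compact subsets of $\Delta$ have uniformly small tails) produces an integer $L_{n,m}$ with $\sum_{k<L_{n,m}}a_k>\tfrac12$ for every $a\in\pi_n(K_m)$. Translating this back through $F$: for every $x\in F^{-1}(K_m)$ some $\phi_{n,k}(x)$ with $k<L_{n,m}$ is positive, and hence $x\in U_{n,k}$.

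Finally, setting $\mathcal{V}_n:=\{U_{n,k}:k<L_{n,n}\}$ produces a finite subfamily of each $\mathcal{U}_n$; given any $x\in X$, one picks $m$ with $F(x)\in K_m$ and applies the previous inclusion with $n=m$ to conclude $x\in\bigcup\mathcal{V}_m\subseteq\bigcup_n\bigcup\mathcal{V}_n$, which witnesses the Menger selection. The main obstacle is producing a target space in which compactness forces a \emph{finite} truncation — the probability simplex is the natural such target, and the normalization $\sum_k\phi_{n,k}=1$ (and hence the partition-of-unity construction) is exactly what allows tightness there to convert a compact image into a finite Menger selection.
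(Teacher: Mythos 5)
Your argument is correct. It shares its skeleton with the paper's proof --- both encode the sequence of covers by a single continuous map into a separable metrizable space (a diagonal of cover-induced maps), apply projective $\sigma$-compactness to the image, and pull finite selections back to $X$ --- but the implementation is genuinely different at both ends. The paper gets its maps abstractly from Engelking's 5.1.J(e) (for each cover $\mathcal{U}_n$ a map $f_n$ onto a separable metrizable $Y_n$ with a cover $\mathcal{V}_n$ whose preimages refine $\mathcal{U}_n$) and then simply quotes that the $\sigma$-compact image is Menger to select finite subfamilies; you instead build the reflection concretely, using Lindel\"of $+$ $T_3$ $\Rightarrow$ normal (indeed paracompact) to get partitions of unity $\{\phi_{n,k}\}_k$ subordinate to countable subcovers, map into $[0,1]^{\omega\times\omega}$, and replace the ``$\sigma$-compact $\Rightarrow$ Menger'' step by a Dini-type uniform-tail estimate on compact subsets of the simplex, reading the finite selection off the diagonal indices $L_{n,n}$. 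Your tightness claim is legitimate: the partial sums $s_L(a)=\sum_{k<L}a_k$ are continuous, increase to the constant $1$ on the simplex, so Dini gives uniform convergence on each compact $\pi_n(K_m)$; and $\phi_{n,k}(x)>0$ forces $x\in U_{n,k}$, so the bookkeeping at the end is sound. What the paper's route buys is brevity and modularity (two quoted facts do all the work); what yours buys is a self-contained proof that makes explicit exactly where finiteness comes from, at the cost of the partition-of-unity construction, which in turn silently uses regularity/normality of $X$ --- harmless here since the paper assumes all spaces are $T_3$, but worth flagging as the one place your argument uses more than ``Lindel\"of''.
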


\noindent
we need only establish that productively Lindelöf spaces are projectively
$\sigma$-compact. But this follows quickly from Lemma \ref{lem1}, since
continuous images of productively Lindelöf spaces are easily seen to be
productively Lindelöf.
\qed\medskip

For the convenience of the reader, we sketch the proofs of Lemmas \ref{lem1},
\ref{lem3} and \ref{lem4}.

\begin{proofref}{Lemma}{lem1}
Embed $X$ in $[0,1]^{\aleph_0}$. $[0,1]^{\aleph_0}$ has a countable
base, so by $CH$ we can take open subsets $\{ U_\alpha \}_{\alpha <
\omega_1}$ of $[0,1]^{\aleph_0}$ such that every open set about $Y =
[0,1]^{\aleph_0} - X$ includes some $U_\alpha$. By taking countable
intersections, we can find a decreasing sequence $\{G_\beta\}_{\beta <
\omega_1}$ of $G_\delta$'s about $Y$, such that every open set about $Y$
includes some $G_\beta$. If $X$ is not $\sigma$-compact, we can assume the
$G_\beta$'s are strictly descending. Pick $p_\beta \in (G_{\beta+1} -
G_\beta) \cap X$. Put a topology on $Z = Y \cup \{ p_\beta : \beta < \omega_1\}$
by strengthening the subspace topology to make each $\{ p_\beta \}$ open.
Then $Z$ is Lindelöf, but $X \times Z$ is not, since $\{ \langle p_\beta,
p_\beta \rangle : \beta < \omega_1 \}$ is closed discrete.
\end{proofref}

\begin{proofref}{Lemma}{lem4}
  By 5.1.J(e) of Engelking's text \cite{Engelking1989}, given a
  Lindelöf space $X$, and an open cover $\mathcal{U}$, there is a
  continuous $f : X \to Y$, $Y$ separable metrizable and an open cover
  $\mathcal{V}$ of $Y$ such that $\{ f^{-1}(V) : V \in \mathcal{V} \}$
  refines $\mathcal{U}$. Given a sequence $\{ \mathcal{U}_n \}_{n <
    \omega}$ of such covers, find the corresponding $f_n$'s, $Y_n$'s
  and $\mathcal{V}_n$'s. Then the diagonal product of the $f_n$'s maps
  $X$ onto a subspace $\hat{Y}$ of $\prod Y_n$. $\hat{Y}$ is
  $\sigma$-compact, hence Menger, so we can take
  finite subsets of the $\mathcal{V}_n$'s forming a cover and then
  pull them back to $X$ to find the required
  finite subsets of the $\mathcal{U}_n$'s.
\end{proofref}

\begin{proofref}{Lemma}{lem3} (Taken from \cite{Gruenhage2009}.)
  Suppose $X$ is Menger and $f$ is a neighborhood assignment for
  $X$. We play a game in which ONE chooses in the $n$th inning an open
  cover $\mathcal{U}_n$ and TWO choses a finite $\mathcal{V}_n
  \subseteq \mathcal{U}_n$. TWO wins if $\{\bigcup \mathcal{V}_n : n <
  \omega \}$ covers $X$. Hurewicz \cite{Hurewicz1925} proved $X$ is Menger
  if and only if ONE has no winning strategy.

  ONE starts by playing $\{ f(x) : x \in X \}$. TWO responds with $\{
  f(x) : x \in S_0 \}$. ONE then plays $\{ f(x) : x \in S_0 \cup S : S$
  a finite subset of $X$, $S \cap \bigcup\{f(x) : x \in S_0 \} =
  \emptyset \}$. If TWO replies with $\{ f(x) : x \in S_0 \cup S_1
  \}$, ONE plays
  \[ \{ f(x) : x \in S_0 \cup S_1 \cup S : S \cap \bigcup \{ f(x) : x
  \in S_0 \cup S_1 \} = \emptyset \}, \] etc. This defines a strategy
  for ONE. Since $X$ is Menger, this is not a winning strategy. Let
  $S_0, \ldots, S_n, \ldots$ be the plays of TWO demonstrating
  this. Then $\bigcup_{n < \omega} S_n$ is closed discrete, and
  $\bigcup \{ f(x) : x \in \bigcup_{n<\omega} {S_n} \}$ covers $X$.
\end{proofref}

\section{Variations on the theme}\label{sec3}

We now move on to more specialized results. Since finite powers
of productively Lindelöf spaces are productively Lindelöf, we note
that:

\begin{thm}
  The Continuum Hypothesis implies that all finite powers of a
  productively Lindelöf space are Menger and hence $D$.
\end{thm}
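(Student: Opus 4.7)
The plan is to reduce the statement to Theorem \ref{thm2} and its proof. First I would verify the observation already noted in the paper: if $X$ is productively Lindelöf, then so is every finite power $X^n$. This follows by a routine induction on $n$: assuming $X^n$ is productively Lindelöf and $Y$ is any Lindelöf space, the space $X^{n+1} \times Y$ is homeomorphic to $X \times (X^n \times Y)$; the factor $X^n \times Y$ is Lindelöf by the inductive hypothesis, and then $X \times (X^n \times Y)$ is Lindelöf by productive Lindelöfness of $X$.

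Once this closure property is in hand, I would apply the chain of implications used in the proof of Theorem \ref{thm2} directly to $X^n$. Namely, since $X^n$ is productively Lindelöf, each of its continuous images in a separable metrizable space is itself productively Lindelöf (continuous images of productively Lindelöf spaces are productively Lindelöf, as noted in the proof of Theorem \ref{thm2}) and metrizable, hence $\sigma$-compact by Lemma \ref{lem1}. Thus $X^n$ is projectively $\sigma$-compact. Since $X^n$ is also Lindelöf (take $Y$ a one-point space in the definition of productively Lindelöf), Lemma \ref{lem4} yields that $X^n$ is Menger, and Lemma \ref{lem3} then gives that $X^n$ is $D$.

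The main item requiring verification is the induction on finite powers, together with the already-stated fact that continuous images of productively Lindelöf spaces are productively Lindelöf; both are immediate from the definitions. No real obstacle is anticipated: the theorem is essentially a clean corollary of (the proof of) Theorem \ref{thm2} applied separately to each finite power $X^n$.
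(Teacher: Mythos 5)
Your proof is correct and follows the same route the paper intends: the observation that finite powers of a productively Lindelöf space are productively Lindelöf (your induction via $X^{n+1}\times Y \cong X\times(X^n\times Y)$ is exactly the standard argument), followed by applying the chain Lemma \ref{lem1} $\Rightarrow$ projectively $\sigma$-compact $\Rightarrow$ Menger (Lemma \ref{lem4}) $\Rightarrow$ $D$ (Lemma \ref{lem3}) to each power $X^n$. Nothing further is needed.
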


\begin{defn}
A $\mathbf{\gamma}$\textbf{-cover} of a space is a countably infinite open cover such that each point is in all but finitely many members of the cover.  A space is \textbf{Hurewicz} if given a sequence $\{\mc{U}_n: n \in \omega\}$ of $\gamma$-covers, there is for each $n$ a finite $\mc{V}_n\subseteq\mc{U}_n$, such that either $\{\bigcup\mc{V}_n : n \in \omega\}$ is a $\gamma$-cover, or else for some $n$, $\bigcup\mc{V}_n$ is a cover.
\end{defn}

This property was also introduced in \cite{Hurewicz1925}. It falls
strictly between ``Menger'' and ``$\sigma$-compact''. Our results can
be improved to obtain:

\begin{thm}\label{thm6}
The Continuum Hypothesis implies finite powers of productively Lindelöf
spaces are Hurewicz.
\end{thm}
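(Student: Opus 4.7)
The plan is to combine three ingredients: (i) finite powers of productively Lindelöf spaces are productively Lindelöf; (ii) under CH, every continuous image of a productively Lindelöf space in a separable metric space is $\sigma$-compact, i.e., productively Lindelöf spaces are projectively $\sigma$-compact --- this is exactly what was observed in the proof of Theorem \ref{thm2}, via Lemma \ref{lem1} applied to the image (which inherits productive Lindelöfness) together with the fact that productively Lindelöf metrizable spaces are $\sigma$-compact; (iii) the Hurewicz analogue of Lemma \ref{lem4}, namely that projectively $\sigma$-compact Lindelöf spaces are Hurewicz. Given (i)--(iii), the theorem follows: every finite power $X^n$ is productively Lindelöf, hence projectively $\sigma$-compact, hence Hurewicz.

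To establish (iii), I would run the proof of Lemma \ref{lem4} almost verbatim, replacing ``Menger'' with ``Hurewicz'' and ``cover'' with ``$\gamma$-cover.'' Given a sequence $\{\mathcal{U}_n\}_{n<\omega}$ of $\gamma$-covers of $X$, use Engelking's 5.1.J(e) to get, for each $n$, a continuous $f_n : X \to Y_n$ with $Y_n$ separable metrizable and an open cover $\mathcal{V}_n$ of $Y_n$ such that $\{f_n^{-1}(V): V \in \mathcal{V}_n\}$ refines $\mathcal{U}_n$. The diagonal product sends $X$ onto $\hat Y \subseteq \prod_n Y_n$; by projective $\sigma$-compactness, $\hat Y$ is $\sigma$-compact, and $\sigma$-compact spaces are easily seen to be Hurewicz (even in the strong sense $U_{fin}(\mathcal{O},\Gamma)$: enumerate $\hat Y = \bigcup_m K_m$ with $K_m$ compact, and at stage $n$ choose a finite subcover of $\bigcup_{m\le n} K_m$). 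Apply this to the open covers $\{\pi_n^{-1}(V)\cap \hat Y : V\in \mathcal{V}_n\}$ of $\hat Y$ to extract finite $\mathcal{V}_n'\subseteq \mathcal{V}_n$ such that every $y\in\hat Y$ lies in $\bigcup\pi_n^{-1}(\mathcal{V}_n')$ for all but finitely many $n$. Pulling back through the refinement produces finite $\mathcal{U}_n'\subseteq \mathcal{U}_n$ with the same cofinite-containment property for every $x\in X$, which gives either a $\gamma$-cover $\{\bigcup \mathcal{U}_n' : n < \omega\}$ or, in the degenerate case that only finitely many $n$'s actually contribute, a single $n$ for which $\bigcup \mathcal{U}_n'$ covers $X$.

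The one nontrivial point is that the Menger-style refinement pull-back in Lemma \ref{lem4} used only the fact that coverings are preserved under pointwise containment, whereas here I also need ``$x\in \bigcup \mathcal{U}_n'$ for cofinitely many $n$'' to transfer from $\hat Y$ to $X$. This is the main thing to check, but it is automatic: for each $V\in\mathcal{V}_n$ fix $U_V\in\mathcal{U}_n$ with $f_n^{-1}(V)\subseteq U_V$, and set $\mathcal{U}_n' = \{U_V : V\in\mathcal{V}_n'\}$. Then $\hat f(x)\in \bigcup\pi_n^{-1}(\mathcal{V}_n')$ iff $f_n(x)\in \bigcup\mathcal{V}_n'$, which forces $x\in \bigcup \mathcal{U}_n'$, and the ``cofinitely many $n$'' quantifier passes through unchanged. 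I would note as a bonus that this in fact proves the stronger statement that, under CH, finite powers of productively Lindelöf spaces satisfy $U_{fin}(\mathcal{O},\Gamma)$, which is a strengthening worth remarking on in the final write-up.
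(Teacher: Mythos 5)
Your proposal is correct and follows exactly the route the paper intends: the paper's entire ``proof'' of this theorem is the remark that it is a straightforward modification of the Menger argument (finite powers remain productively Lindelöf, hence projectively $\sigma$-compact under CH via Lemma \ref{lem1}, and the Lemma \ref{lem4} argument transfers since $\sigma$-compact spaces satisfy the strong Hurewicz selection property), which is precisely what you carry out in detail. Your explicit verification that the cofinite-containment quantifier pulls back through the refinement, and the remark that one even gets $U_{fin}(\mathcal{O},\Gamma)$, are correct fillings-in of what the paper leaves to the reader.
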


The proof of Theorem \ref{thm6} is a straightforward modification of what we
have done for Menger.

\begin{pbm}
Are any of our uses of the Continuum Hypothesis necessary?
\end{pbm}

The assumption of the Continuum Hypothesis in our results can be weakened
somewhat. $\mathfrak{b}$ is the least cardinal of a subset $B$ of $^\omega
\omega$ which is unbounded under eventual dominance. $CH$ implies
$\mathfrak{b} = \aleph_1$.

\begin{thm}
$\mathfrak{b} = \aleph_1$ implies every productively Lindelöf space is Menger
and hence $D$.
\end{thm}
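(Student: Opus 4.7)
The plan is to follow the proof of Theorem~\ref{thm2}, replacing the use of Lemma~\ref{lem1} by the weaker statement: under $\mathfrak{b}=\aleph_1$, every productively Lindelöf separable metrizable space is Menger. Granted this, the rest of the argument for Theorem~\ref{thm2} applies essentially verbatim. Continuous images of productively Lindelöf spaces are productively Lindelöf, so every continuous image of a productively Lindelöf $X$ into a separable metrizable space is Menger. Inspecting the proof of Lemma~\ref{lem4}, the only use of ``$\sigma$-compact'' is to conclude that the diagonal image $\hat Y$ is Menger, so that proof in fact establishes: any Lindelöf space whose continuous images into separable metrizable spaces are all Menger is itself Menger. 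Lemma~\ref{lem3} then delivers $D$.

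To prove the reduced statement, I would adapt the Michael--Alster construction in Lemma~\ref{lem1}. Suppose $X$ is separable metrizable and not Menger, witnessed by a sequence $\{\mathcal{U}_n\}_{n<\omega}$ of countable open covers, which we may assume (after slightly enlarging each member) to consist of open sets of a metric compactification $K \supseteq X$; enumerate $\mathcal{U}_n = \{U^n_k : k < \omega\}$. By $\mathfrak{b}=\aleph_1$, fix an unbounded family $\{g_\alpha : \alpha<\omega_1\} \subseteq {}^\omega\omega$, and for each $\alpha$ choose $p_\alpha \in X - \bigcup_{n<\omega}\bigcup_{k \le g_\alpha(n)} U^n_k$, which is nonempty by failure of Menger. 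Put $Y = K - X$ and topologise $Z = Y \cup \{p_\alpha : \alpha < \omega_1\}$ by strengthening the subspace topology so that each $\{p_\alpha\}$ is open, exactly as in the proof of Lemma~\ref{lem1}. Then $\{\langle p_\alpha, p_\alpha \rangle : \alpha < \omega_1\}$ is closed discrete in $X \times Z$, so $X \times Z$ will fail to be Lindelöf provided $Z$ is Lindelöf.

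The main obstacle is verifying Lindelöfness of $Z$. The $CH$ proof used a descending $\omega_1$-sequence of $G_\delta$'s about $Y$ to form a neighborhood base; here one must instead exploit the eventual-dominance combinatorics of $\{g_\alpha\}$. Given any open cover $\mathcal{W}$ of $Z$, basic open neighborhoods of points of $Y$ are controlled in $K$ by which of the $U^n_k$ they contain in their complements, and any countable subfamily of such neighborhoods is effectively encoded by a single $h \in {}^\omega\omega$; unboundedness of $\{g_\alpha\}$ then forces all but countably many of the $p_\alpha$ to be captured by such a subfamily, yielding a countable subcover of $Z$. Pinning this argument down --- choosing the compactification, making the encoding precise, and locating the dominating $h$ --- is the delicate technical point, and it is precisely here that the hypothesis $\mathfrak{b}=\aleph_1$ is used. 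Once that construction is carried out, productively Lindelöf separable metrizable spaces are Menger, and by the reduction of the first paragraph so are all productively Lindelöf spaces; Lemma~\ref{lem3} then completes the proof.
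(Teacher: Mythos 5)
Your first paragraph is exactly the paper's route: the paper observes that what Arhangel'ski\u\i{}'s argument (the proof of Lemma \ref{lem4}) really shows is that Lindelöf \emph{projectively Menger} spaces are Menger, notes that continuous images of productively Lindelöf spaces are productively Lindelöf, and then quotes the result of Alas, Aurichi, Junqueira and Tall \cite{AAJT} that $\mathfrak{b}=\aleph_1$ implies productively Lindelöf metrizable spaces are Menger; Lemma \ref{lem3} finishes. So the reduction is correct and identical in substance. The difference is that where the paper simply cites \cite{AAJT} for the metrizable case, you attempt to prove it, and there your argument has a genuine gap: you explicitly leave the Lindelöfness of $Z$ as a ``delicate technical point'' to be ``pinned down,'' and the one-line justification you offer --- that ``unboundedness of $\{g_\alpha\}$ forces all but countably many of the $p_\alpha$ to be captured'' --- is false as stated. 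Mere unboundedness is not enough: an unbounded family of size $\aleph_1$ can perfectly well have uncountably many members below a single $h\in{}^\omega\omega$, in which case uncountably many $p_\alpha$ could sit inside one compact subset of $X$ and the concentration argument collapses.

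The gap is fillable, and the repair is worth recording since it is the whole content of the lemma you are trying to reprove. Use $\mathfrak{b}=\aleph_1$ to take $\{g_\alpha:\alpha<\omega_1\}$ not merely unbounded but $\le^*$-increasing (a $\mathfrak{b}$-scale). Given an open cover of $Z$, use Lindelöfness of $Y=K-X$ (it is separable metrizable) to extract countably many basic members $W_i\cap Z$, $W_i$ open in $K$, covering $Y$; then $C=K-\bigcup_i W_i$ is a compact subset of $X$, and compactness applied to each cover $\mathcal{U}_n$ yields $h\in{}^\omega\omega$ with $C\subseteq\bigcup_{k\le h(n)}U^n_k$ for every $n$. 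If $p_\alpha\in C$, then since $p_\alpha\notin\bigcup_{k\le g_\alpha(n)}U^n_k$ we get $g_\alpha(n)<h(n)$ for all $n$, so $g_\alpha\le^* h$; as the scale is $\le^*$-increasing and unbounded, only countably many $\alpha$ qualify, and these exceptional $p_\alpha$ are mopped up by countably many further members of the cover. The same scale argument is also needed to see that uncountably many of the $p_\alpha$ are distinct, so that the closed discrete diagonal really witnesses non-Lindelöfness of $X\times Z$. With these points supplied your construction does recover the \cite{AAJT} lemma, and the rest of your proof matches the paper's; as submitted, however, the theorem's essential new ingredient is asserted rather than proved.
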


\begin{proof}
Making the obvious definition of \emph{projectively Menger}, we see that
what Arhangel'ski\u\i{} really proved above was that \emph{Lindelöf
projectively Menger spaces are Menger}, which indeed was later proved
specifically in \cite{BonCamMat}. Thus our result follows, since Alas et
al \cite{AAJT} proved \emph{$\mathfrak{b} = \aleph_1$ implies productively
Lindelöf metrizable spaces are Menger}.
\end{proof}

\begin{cor}
Every productively Lindelöf space which is the union of $\le \aleph_1$
compact sets is Menger and hence $D$.
\end{cor}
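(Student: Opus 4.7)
The plan is to split the argument according to the value of the cardinal characteristic $\mathfrak{b}$. If $\mathfrak{b} = \aleph_1$, the immediately preceding theorem already does all the work: every productively Lindelöf space is Menger, hence $D$ by Lemma~\ref{lem3}, and the decomposition into $\le\aleph_1$ compact sets plays no role in this case.

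The substantive case is $\mathfrak{b} > \aleph_1$. Here productive Lindelöfness is used only to conclude that $X$ is Lindelöf (which is automatic, since the product of $X$ with a one-point space is $X$). Now $X$ is a Lindelöf space expressed as $\bigcup_{\alpha<\omega_1}K_\alpha$ with each $K_\alpha$ compact, and $\omega_1$ is strictly less than $\mathfrak{b}$. I would then invoke the classical union theorem to the effect that a Lindelöf space which is the union of fewer than $\mathfrak{b}$ Hurewicz (in particular, compact) subspaces is itself Hurewicz, and \emph{a fortiori} Menger. One more application of Lemma~\ref{lem3} yields $D$.

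The only genuine obstacle is justifying this union theorem. The standard argument passes through the $\omega^\omega$-characterization of the Hurewicz property: given a sequence $(\mathcal{U}_n)_{n<\omega}$ of open covers of $X$, one may assume (using Lindelöfness to reduce to countable covers and fixing an enumeration of each) that for each $\alpha<\omega_1$ the compactness of $K_\alpha$ yields a function $f_\alpha\in\omega^\omega$ such that the first $f_\alpha(n)$ members of $\mathcal{U}_n$ cover $K_\alpha$ for every $n$. Since $\omega_1<\mathfrak{b}$, the family $\{f_\alpha:\alpha<\omega_1\}$ is dominated by a single $g\in\omega^\omega$, and selecting the first $g(n)$ members of $\mathcal{U}_n$ gives finite $\mathcal{V}_n\subseteq\mathcal{U}_n$ with $K_\alpha\subseteq\bigcup\mathcal{V}_n$ for all $n$ sufficiently large and every $\alpha$, witnessing the Hurewicz (hence Menger) property for $X$. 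This is the only point where the cardinal arithmetic enters, and it is the crux of the proof.
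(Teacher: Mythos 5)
Your proof is correct, but it takes a genuinely different route from the paper. The paper argues projectively: a continuous image of $X$ in a separable metrizable space is again productively Lindelöf and again a union of $\le\aleph_1$ compact sets (images of compacta are compact), so the cited result of \cite{AAJT} for spaces of countable type --- hence for metrizable spaces --- applies to the image; $X$ is then Lindelöf and projectively Menger, hence Menger, and $D$ by Lemma \ref{lem3}. You instead split on the value of $\mathfrak{b}$: if $\mathfrak{b}=\aleph_1$ you quote the preceding theorem (where the decomposition into compacta is not needed), and if $\mathfrak{b}>\aleph_1$ you use only Lindelöfness plus the standard bounding argument --- reduce each cover to a countable one, extract $f_\alpha\in\omega^\omega$ from compactness of each $K_\alpha$, and dominate the $\aleph_1<\mathfrak{b}$ many functions by a single $g$ --- to get Hurewicz, hence Menger, hence $D$. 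Both cases check out, and the case analysis is a legitimate way to obtain the unconditional statement; indeed it mirrors how the paper itself handles the analogous Hurewicz corollary via $\mathfrak{d}>\aleph_1$ versus $\mathfrak{d}=\aleph_1$. What the two approaches buy is slightly different: yours is essentially self-contained (modulo the $\mathfrak{b}=\aleph_1$ theorem) and even yields Hurewicz in the $\mathfrak{b}>\aleph_1$ case, while the paper's projective argument showcases the method that is the theme of the article and feeds into the finer later corollary that such spaces are in fact projectively $\sigma$-compact, which your route does not give.
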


\begin{proof}
This is proved for spaces of countable type, hence in particular for
metrizable spaces in \cite{AAJT}. Our result follows, since if a space
is the union of $\aleph_1$ compact sets, so is its continuous image.
\end{proof}

We can remove $CH$ from Theorem \ref{thm2} by strengthening the
hypothesis.  In \cite{AurichiTall} we defined a space to be
\emph{indestructibly productively Lindelöf} if it remained
productively Lindelöf in any countably closed forcing extension.

\begin{thm}
Indestructibly productively Lindelöf spaces are projectively $\sigma$-compact
and hence Hurewicz, Menger and $D$.
\end{thm}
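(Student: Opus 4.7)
The plan is to reduce to Lemma \ref{lem1} by forcing CH with a countably closed poset, use the indestructibility hypothesis to apply Michael's result in the extension, and then bring $\sigma$-compactness of the continuous image back to $V$ via absoluteness.

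Let $X$ be indestructibly productively Lindelöf, and let $f : X \to M$ be a continuous surjection onto a separable metrizable space. Since continuous images of productively Lindelöf spaces are productively Lindelöf and the witnessing map $f$ persists into any forcing extension, $M$ is itself indestructibly productively Lindelöf. Embed $M \hookrightarrow [0,1]^{\omega}$, and force over $V$ with a countably closed poset collapsing $2^{\aleph_0}$ to $\aleph_1$ (for instance, $\mathrm{Coll}(\omega_1, 2^{\aleph_0})$). In the extension $V[G]$, CH holds and $M$ remains productively Lindelöf, so Lemma \ref{lem1} yields $M = \bigcup_{n<\omega} K_n$ with each $K_n$ compact in $V[G]$.

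The decisive step is to push this decomposition back to $V$. Countably closed forcing adds no new $\omega$-sequences of ground-model elements, hence no new reals; since $[0,1]^{\omega}$ has a countable basis already in $V$, every open subset of the Hilbert cube in $V[G]$ is a countable union of ground-model basic opens and thus already lies in $V$. Consequently every closed (hence every compact) subset of $[0,1]^{\omega}$ in $V[G]$ lies in $V$, so each $K_n$ and the whole sequence $\langle K_n : n < \omega \rangle$ live in $V$. Hence $M = \bigcup_n K_n$ already witnesses $\sigma$-compactness of $M$ in $V$, proving projective $\sigma$-compactness of $X$. Since $X$ is itself Lindelöf, Lemmas \ref{lem4} and \ref{lem3} then give Menger and $D$; the Hurewicz conclusion follows from the same diagonal-product argument as in Lemma \ref{lem4} applied to $\gamma$-covers, using that $\sigma$-compact spaces are Hurewicz (cf.\ Theorem \ref{thm6}). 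The only delicate point is the absoluteness claim just sketched; the rest is packaging of earlier lemmas.
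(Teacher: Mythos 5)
Your proposal is correct and follows essentially the same route as the paper: collapse the continuum to $\aleph_1$ by countably closed forcing, use indestructibility to make the separable metrizable image productively Lindelöf in the extension so Lemma \ref{lem1} applies there, and pull the $\sigma$-compact decomposition back to the ground model because countably closed forcing adds no new countable sequences, hence no new closed subsets or countable decompositions of a second countable ground-model space. The only cosmetic differences are that the paper collapses $\max(w(X),|X|,2^{\aleph_0})$ rather than just $2^{\aleph_0}$, and it verifies compactness of the ground-model pieces via countable compactness rather than via closedness in the compact Hilbert cube; neither affects the argument.
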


\begin{proof}
Let $f : X \to Y$, $Y$ separable metrizable. Collapse $\max(w(X)$, $\left|
X \right|$, $2^{\aleph_0})$ to $\aleph_1$ by countably closed forcing. In the
extension, $X$ is productively Lindelöf, $Y$ is separable metrizable, and $f$
is continuous. Therefore $Y$ is $\sigma$-compact. Countably closed forcing
adds no new closed sets to separable metrizable spaces, so $Y =
\bigcup_{n<\omega} F_n$, where the $F_n$'s are in the ground model. The $F_n$'s
are countably compact in the ground model, and so they are in fact compact
there. No new countable decompositions of $Y$ are added by the forcing, so
indeed $Y$ is $\sigma$-compact in the ground model.
\end{proof}

We had earlier \cite{TallNote} obtained the Hurewicz, etc. conclusions, but this
new result is stronger.

Similarly, in \cite{AurichiTall} we proved that $\mathfrak{d} = \aleph_1$
implied productively Lindelöf metrizable spaces are Hurewicz, so:

\begin{thm}
$\mathfrak{d} = \aleph_1$ implies productively Lindelöf spaces are Hurewicz.
\end{thm}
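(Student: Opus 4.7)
The plan is to imitate, layer by layer, the proof of the preceding $\mathfrak{b} = \aleph_1$ theorem, with ``Menger'' replaced throughout by ``Hurewicz''. Define a space to be \emph{projectively Hurewicz} if every continuous image of it in a separable metrizable space is Hurewicz. The argument then splits into two ingredients: (i) a Hurewicz analog of Lemma \ref{lem4}, to the effect that every Lindel\"of projectively Hurewicz space is Hurewicz; and (ii) the cited Aurichi--Tall result that $\mathfrak{d} = \aleph_1$ implies productively Lindel\"of metrizable spaces are Hurewicz.

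Granted (i) and (ii), the finish is immediate. Let $X$ be productively Lindel\"of; in particular $X$ is itself Lindel\"of. Continuous images of productively Lindel\"of spaces are productively Lindel\"of, so any continuous image of $X$ in a separable metrizable space is productively Lindel\"of and metrizable, hence Hurewicz by (ii). Therefore $X$ is projectively Hurewicz, and (i) concludes that $X$ is Hurewicz.

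The real content is (i), and here the template of Lemma \ref{lem4} still applies. Given a sequence of $\gamma$-covers $\{\mc{U}_n : n < \omega\}$ of $X$, one wants, for each $n$, a continuous $f_n : X \to Y_n$ into a separable metrizable $Y_n$ together with a $\gamma$-cover $\mc{V}_n$ of $Y_n$ whose preimages $\{f_n^{-1}(V) : V \in \mc{V}_n\}$ refine $\mc{U}_n$. Then the diagonal image $\hat{Y} \subseteq \prod Y_n$ is a separable metrizable continuous image of $X$, so it is Hurewicz by projective Hurewicz-ness, and applying the Hurewicz property of $\hat{Y}$ to the pulled-back $\gamma$-covers and then transferring back to $X$ yields the required finite subsets of the $\mc{U}_n$. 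The one step requiring care is the first: Engelking's 5.1.J(e), as used in Lemma \ref{lem4}, delivers only an open cover on $Y_n$, whereas here we need it to be a $\gamma$-cover so that its pre-image remains a $\gamma$-cover of $\hat{Y}$. This upgrade is the main obstacle and is the analog for Hurewicz of the projective-Menger theorem of \cite{BonCamMat}; once granted, the proof is, as the author indicates by the word ``so'', purely formal.
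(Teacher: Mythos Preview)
Your approach is exactly the paper's: reduce to the metrizable case via ``Lindel\"of + projectively Hurewicz $\Rightarrow$ Hurewicz'' and then quote the Aurichi--Tall result that $\mathfrak{d}=\aleph_1$ makes productively Lindel\"of metrizable spaces Hurewicz. The paper's ``so'' and its earlier remark that Theorem~\ref{thm6} is a ``straightforward modification'' of the Menger argument encode precisely your ingredients (i) and (ii).

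One comment on the ``main obstacle'' you flag in (i): it dissolves once you use the standard equivalent form of the Hurewicz property --- for every sequence $\{\mc{U}_n\}$ of \emph{arbitrary} open covers there are finite $\mc{V}_n\subseteq\mc{U}_n$ with each point in $\bigcup\mc{V}_n$ for all but finitely many $n$. (For Lindel\"of spaces this is equivalent to the paper's $\gamma$-cover formulation: pass to countable subcovers and replace each cover by the increasing sequence of finite unions to obtain $\gamma$-covers.) With this version, Engelking's 5.1.J(e) applies verbatim as in Lemma~\ref{lem4}: no upgrade of the factored covers to $\gamma$-covers is needed, and the pull-back from $\hat{Y}$ preserves the ``all but finitely many'' conclusion. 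So (i) really is the promised straightforward modification, not an additional hurdle.
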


\begin{cor}
Every productively Lindelöf space which is the union of $\le \aleph_1$
compact sets is Hurewicz.
\end{cor}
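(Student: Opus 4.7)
The plan is to follow exactly the pattern of the analogous Menger corollary just above, so no new set-theoretic hypothesis is needed. Let $X$ be productively Lindelöf with $X=\bigcup_{\alpha<\omega_1}K_\alpha$, each $K_\alpha$ compact. First I would record the two preservation properties that make the projective argument work: (i) continuous images of productively Lindelöf spaces are productively Lindelöf (already used in the proof of Theorem \ref{thm2}); and (ii) continuous images of unions of at most $\aleph_1$ compact sets are unions of at most $\aleph_1$ compact sets, since the image of each $K_\alpha$ is compact. Consequently, every continuous image of $X$ in a separable metrizable space satisfies both hypotheses.

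Second, I would invoke the metrizable version from \cite{AurichiTall}/\cite{AAJT}: productively Lindelöf metrizable spaces which are the union of $\le\aleph_1$ compact sets are Hurewicz. This is the content these references actually prove in ZFC; the assumption $\mathfrak{d}=\aleph_1$ in the previous theorem enters only to guarantee the $\aleph_1$-union-of-compacts decomposition in the metrizable case. Applying this to every continuous image of $X$ in a separable metrizable space, combined with step one, tells us that $X$ is \emph{projectively Hurewicz}.

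Third, I would show that a Lindelöf projectively Hurewicz space is Hurewicz, the Hurewicz analogue of Lemma \ref{lem4}. The argument is the one used there: given a sequence $\{\mc{U}_n\}_{n<\omega}$ of $\gamma$-covers of $X$, apply Engelking's 5.1.J(e) in a form that respects $\gamma$-covers to obtain continuous maps $f_n\colon X\to Y_n$ with $Y_n$ separable metrizable and $\gamma$-covers $\mc{V}_n$ of $Y_n$ such that $\{f_n^{-1}(V):V\in\mc{V}_n\}$ refines $\mc{U}_n$; take the diagonal product into the separable metrizable image $\hat Y\subseteq\prod_n Y_n$; since $\hat Y$ is a continuous image of $X$, it is Hurewicz by the previous step; pick witnessing finite subfamilies in $\hat Y$ and pull them back to finite $\mc{V}_n'\subseteq\mc{U}_n$.

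The main obstacle is step three: verifying that the Engelking reduction and the diagonal/pull-back preserve the $\gamma$-cover structure needed by the Hurewicz selection principle. This is routine but a little fussier than for Menger, because one must argue either that the pulled-back families form a $\gamma$-cover of $X$, or that one of the $\bigcup\mc{V}_n'$ already covers $X$. Alternatively, one may cite the Hurewicz counterpart of the Bonanzinga–Cammaroto–Matveev result \cite{BonCamMat} used in the proof of the earlier theorem about $\mathfrak{b}=\aleph_1$, which packages precisely the projectively-Hurewicz-implies-Hurewicz implication and lets us finish in one line.
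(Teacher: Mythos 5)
Your argument is essentially correct, but it takes a genuinely different route from the paper's. The paper's own proof is a one-line dichotomy on $\mathfrak{d}$: if $\mathfrak{d}=\aleph_1$, the immediately preceding theorem makes \emph{every} productively Lindelöf space Hurewicz; if $\mathfrak{d}>\aleph_1$, the corollary as stated was already proved in \cite{TallNote}. You instead avoid any cardinal-arithmetic case split: you reduce to continuous separable metrizable images (which remain productively Lindelöf and remain unions of $\le\aleph_1$ compact sets), apply a ZFC result for the metrizable case, and then transfer back via ``Lindelöf $+$ projectively Hurewicz $\Rightarrow$ Hurewicz''. This is in substance how the paper proves the \emph{next}, stronger corollary (projective $\sigma$-compactness), so your route actually yields more, and it is uniform rather than a dichotomy. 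Two caveats. First, your step two misdescribes the literature: the $\mathfrak{d}=\aleph_1$ hypothesis in \cite{AurichiTall} is a genuine hypothesis of that theorem, not merely a device to get an $\aleph_1$-decomposition; the ZFC fact you actually need is the lemma of \cite{AAJT} quoted in the paper, that productively Lindelöf spaces of countable type (hence metrizable spaces) which are unions of $\le\aleph_1$ compact sets are $\sigma$-compact --- with that citation your step two is fine and in fact gives projective $\sigma$-compactness, not just projective Hurewicz-ness. Second, for step three you need not fuss with adapting Engelking's 5.1.J(e) to $\gamma$-covers: either cite \cite{BonCamMat} for ``Lindelöf projectively Hurewicz implies Hurewicz'', or simply note that projective $\sigma$-compactness plus Lindelöf gives Hurewicz by the same modification used for Theorem \ref{thm6}. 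With those repairs your proof is complete; what the paper's proof buys is brevity given earlier results, while yours buys a ZFC-internal argument and the stronger projective conclusion.
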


\begin{iproof}
The corollary follows since it was proved from $\mathfrak{d} > \aleph_1$ in
\cite{TallNote}.
\end{iproof}

A finer analysis leads to:

\begin{cor}
Every productively Lindelöf space which is the union of $\le \aleph_1$
compact sets is projectively $\sigma$-compact.
\end{cor}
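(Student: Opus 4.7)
The plan is to reduce to the separable metrizable case and then sharpen the Hurewicz conclusion of the previous corollary to genuine $\sigma$-compactness of the image. Let $X$ be productively Lindelöf with $X=\bigcup_{\alpha<\omega_1}K_\alpha$, each $K_\alpha$ compact, and let $f: X\to Y$ be continuous with $Y$ separable metrizable. Since continuous images of productively Lindelöf spaces are productively Lindelöf, $f(X)$ is a productively Lindelöf separable metrizable space, and $f(X)=\bigcup_{\alpha<\omega_1}f(K_\alpha)$ is a union of $\le\aleph_1$ compact sets. Replacing $Y$ by $f(X)$, it suffices to show that every productively Lindelöf separable metrizable space which is a union of $\le\aleph_1$ compact sets is $\sigma$-compact.

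I would then split on $\mathfrak{d}$. If $\mathfrak{d}>\aleph_1$, I would reexamine the proof in \cite{TallNote} that yields Hurewicz from the union-of-$\aleph_1$-compacts hypothesis: its core is the construction of a Lindelöf test space built from a chain of compact sets in the spirit of Lemma \ref{lem1}, and the finer analysis is to observe that this construction already rules out non-$\sigma$-compact separable metrizable images, not merely non-Hurewicz ones. If $\mathfrak{d}=\aleph_1$, the earlier theorem that $\mathfrak{d}=\aleph_1$ implies productively Lindelöf spaces are Hurewicz makes $Y$ Hurewicz, and combining this with the characterization that a Hurewicz separable metrizable space has every continuous image in $\omega^\omega$ bounded --- together with the $\aleph_1$-compact-cover hypothesis --- should force $Y$ to be $\sigma$-compact, since any potential cofinal $\aleph_1$-chain in the compact cover can be re-indexed to a countable one via boundedness and $\mathfrak{d}=\aleph_1$.

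The main obstacle will be the $\mathfrak{d}=\aleph_1$ case, since Hurewicz is strictly weaker than $\sigma$-compactness in ZFC even among separable metrizable spaces, so the compact cover of size $\aleph_1$ must play a genuinely essential role in the extraction. As a fallback, one may sidestep the combinatorics entirely by the forcing method used earlier in the section: collapse $2^{\aleph_0}$ to $\aleph_1$ by countably closed forcing, apply Lemma \ref{lem1} in the extension to obtain $\sigma$-compactness of the image there, and then descend to the ground model using that countably closed forcing adds no new closed sets to separable metrizable spaces and no new countable decompositions, exactly as in the indestructibility argument preceding the theorem that $\mathfrak{d}=\aleph_1$ implies Hurewicz.
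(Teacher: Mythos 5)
Your opening reduction is fine and matches the paper's implicit first step: images of productively Lindelöf spaces are productively Lindelöf, images of the compact cover form a compact cover of $f(X)$, so it suffices to show that a productively Lindelöf separable metrizable space which is a union of $\le\aleph_1$ compact sets is $\sigma$-compact. But that is exactly the point at which your argument has a genuine gap: the paper disposes of it by quoting the lemma of Alas et al.\ \cite{AAJT} that every productively Lindelöf space of \emph{countable type} (in particular every metrizable one) which is the union of $\le\aleph_1$ compact sets is $\sigma$-compact --- a ZFC result --- whereas your case split on $\mathfrak{d}$ never actually proves it. In the case $\mathfrak{d}>\aleph_1$ you only express the hope that the Hurewicz proof from \cite{TallNote} ``already rules out'' non-$\sigma$-compact images; no argument is given. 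In the case $\mathfrak{d}=\aleph_1$ the step you rely on is false: ``Hurewicz $+$ union of $\le\aleph_1$ compact sets $+$ $\mathfrak{d}=\aleph_1$ implies $\sigma$-compact'' fails, because $\mathfrak{d}=\aleph_1$ gives $\mathfrak{b}=\aleph_1$, and a $\mathfrak{b}$-scale construction (\cite{Just1996}, \cite{Tsaban2008}, cited in the paper) yields a Hurewicz, non-$\sigma$-compact set of reals of size $\aleph_1$, which is trivially a union of $\aleph_1$ singletons. Any correct argument must use productive Lindelöfness of the image itself, not merely its Hurewicz property; being a union of $\aleph_1$ compact sets carries no strength for spaces of size $\le\aleph_1$.

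The fallback is also not available. Collapsing $2^{\aleph_0}$ to $\aleph_1$ by countably closed forcing and applying Lemma \ref{lem1} in the extension requires that the image remain productively Lindelöf there, and countably closed forcing need not preserve productive Lindelöfness --- that is precisely why the paper introduces the separate hypothesis ``indestructibly productively Lindelöf'' and notes that even the compact space $2^{\omega_1}$ fails it. Moreover, your fallback never uses the $\aleph_1$-compact-cover hypothesis at all, so if it worked it would prove outright in ZFC that every productively Lindelöf space is projectively $\sigma$-compact, which is well beyond what is known (the paper needs $CH$ for that conclusion). The fix is simply to cite, or reprove, the \cite{AAJT} lemma for countable type spaces in place of your case analysis.
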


\begin{proof}
This follows immediately from the fact that:

\begin{lem}[ \cite{AAJT}]
Every productively Lindelöf space of countable type which is the union of
$\le \aleph_1$ compact sets is $\sigma$-compact.
\end{lem}

\noindent
since every metrizable space is of countable type.
\end{proof}

Also in \cite{TallNote}, we proved that $Add(\mathcal{M}) = 2^{\aleph_0}$
implies productively Lindelöf metrizable spaces are Hurewicz. Recall
$Add(\mathcal{M})$ is the least $\kappa$ such that there are $\kappa$ many
first category subsets of $\mathbb{R}$ with union not of first category.

By the usual reasoning, we have that:

\begin{thm}
$Add(\mathcal{M}) = 2^{\aleph_0}$ implies productively Lindelöf spaces
are Hurewicz.
\end{thm}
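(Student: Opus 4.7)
The plan is to emulate the proof of the previous theorem (``$\mathfrak{b}=\aleph_1$ implies productively Lindelöf spaces are Menger''), substituting ``Hurewicz'' for ``Menger'' throughout. Introduce the obvious notion: a space is \emph{projectively Hurewicz} if every continuous image of it in a separable metrizable space is Hurewicz. The theorem then reduces to two ingredients: (a) Lindelöf projectively Hurewicz spaces are Hurewicz, and (b) $Add(\mc M)=2^{\aleph_0}$ implies every productively Lindelöf space is projectively Hurewicz.

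Ingredient (b) is immediate from the result of \cite{TallNote} quoted just above: continuous images of productively Lindelöf spaces are productively Lindelöf, so a continuous image in a separable metrizable space is productively Lindelöf metrizable, hence Hurewicz.

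For (a), I would adapt the diagonal-product argument of Lemma \ref{lem4}, arranged so that $\gamma$-covers are preserved at each step. Given $\gamma$-covers $\{\mc U_n\}_{n<\omega}$ of a Lindelöf projectively Hurewicz $X$, enumerate each $\mc U_n=\{U_{n,k}:k<\omega\}$ and define $f_n:X\to 2^\omega$ by $f_n(x)(k)=1$ iff $x\in U_{n,k}$. Because $\mc U_n$ is a $\gamma$-cover, $f_n$ lands in the separable metrizable set $E\subseteq 2^\omega$ of eventually-$1$ sequences, and the cylinders $V_{n,k}=\{y\in E:y(k)=1\}$ form a $\gamma$-cover of $f_n(X)$ whose $f_n$-preimages are exactly the $U_{n,k}$. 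The diagonal product $\Delta_n f_n$ maps $X$ onto a separable metrizable $\hat Y$, which is Hurewicz by the projectively Hurewicz hypothesis. Apply the Hurewicz property in $\hat Y$ to the $\gamma$-covers obtained by composing each $\mc V_n=\{V_{n,k}\}$ with the coordinate projection $\pi_n$, and pull the chosen finite subfamilies back through $\Delta_n f_n$ to recover the required finite subfamilies of the $\mc U_n$.

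The main obstacle, and the only point that is not word-for-word parallel to the Menger case, is the $\gamma$-cover preservation in the reduction to a separable metrizable image: Engelking 5.1.J(e) as used in Lemma \ref{lem4} delivers only a refining open cover, not a $\gamma$-cover, so one cannot invoke it as a black box. The coordinate-indicator construction above is the replacement. Once (a) is in place, combining it with (b) yields the theorem.
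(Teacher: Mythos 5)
Your overall decomposition is exactly the paper's ``usual reasoning'': define \emph{projectively Hurewicz}, note (b) that under $Add(\mathcal{M})=2^{\aleph_0}$ every separable metrizable continuous image of a productively Lindelöf space is productively Lindelöf metrizable and hence Hurewicz by the cited result of \cite{TallNote}, and then invoke (a) that Lindelöf projectively Hurewicz spaces are Hurewicz (this is the Hurewicz analogue of Lemma \ref{lem4}, proved in \cite{BonCamMat}). Ingredient (b) is fine as you state it.

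The problem is your proof of (a): the maps $f_n:X\to 2^\omega$ defined by $f_n(x)(k)=1$ iff $x\in U_{n,k}$ are not continuous. The indicator function of an open set is continuous only when that set is clopen, and a regular Lindelöf space need not be zero-dimensional, so $f_n$ is not a legitimate continuous image into a separable metrizable space and the projectively Hurewicz hypothesis cannot be applied to it. Moreover, the obstacle this construction was meant to circumvent is not actually an obstacle: you do not need the covers of the image to be $\gamma$-covers. Use Engelking 5.1.J(e) exactly as in Lemma \ref{lem4}: for each $n$ get continuous $f_n:X\to Y_n$ and a (countable) open cover $\mathcal{V}_n$ of $Y_n$ together with a refinement assignment $V\mapsto U(V)\in\mathcal{U}_n$ with $f_n^{-1}(V)\subseteq U(V)$. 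Let $\hat Y\subseteq\prod Y_n$ be the image of the diagonal product; $\hat Y$ is Hurewicz by projective Hurewiczness, and since $\hat Y$ is Lindelöf the paper's $\gamma$-cover formulation is equivalent to the arbitrary-open-cover one (replace a countable cover with the $\gamma$-cover of its increasing finite unions, unless it has a finite subcover). Applying this to the covers $\{\pi_n^{-1}(V)\cap\hat Y: V\in\mathcal{V}_n\}$ gives finite $\mathcal{V}_n'\subseteq\mathcal{V}_n$ with every point of $\hat Y$ in $\bigcup\mathcal{V}_n'$ for all but finitely many $n$; setting $\mathcal{U}_n'=\{U(V):V\in\mathcal{V}_n'\}$ and pulling back along the diagonal map transfers the ``all but finitely many'' property to $X$, since $f_n^{-1}(V)\subseteq U(V)$. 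With (a) repaired this way, your proof matches the paper's intended argument.
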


\begin{defn}[\cite{Hansell}]
A \emph{Michael space} is a Lindelöf space such that its product with
$\mathbb{P}$, the space of irrationals, is not Lindelöf. A space is
\emph{$K$-analytic} if it is the continuous image of a Lindelöf
\v{C}ech-complete space.
\end{defn}

In \cite{TallNote} we asked whether it is consistent that every productively
Lindelöf $K$-analytic space is $\sigma$-compact. We now know this holds under
$CH$, but we can considerably weaken that hypothesis and still get that such
spaces are projectively $\sigma$-compact (and hence $D$, etc.):

\begin{thm}
If there is a Michael space, then productively Lindelöf $K$-analytic spaces
are projectively $\sigma$-compact.
\end{thm}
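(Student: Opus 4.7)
The plan is to reduce the claim to the classical Hurewicz dichotomy for analytic sets in Polish spaces, and then extract a contradiction from the Michael space hypothesis.

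First I would make the obvious reduction: given a productively Lindelöf $K$-analytic space $X$ and a continuous $f:X\to Y$ with $Y$ separable metrizable, I may replace $X$ by $f(X)\subseteq Y$. The continuous image of a productively Lindelöf space is productively Lindelöf (as noted in the proof of Theorem~\ref{thm2}), and it is clear from the definition that a continuous image of a $K$-analytic space is $K$-analytic. So it suffices to prove: every productively Lindelöf, $K$-analytic, separable metrizable space $Z$ is $\sigma$-compact.

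The key input is the classical Hurewicz dichotomy: a separable metrizable $K$-analytic (equivalently, analytic) space is either $\sigma$-compact or contains a closed copy of the irrationals $\mathbb{P}$. Suppose for contradiction that $Z$ is not $\sigma$-compact. Fix a Michael space $M$. Since $Z$ is productively Lindelöf, $Z\times M$ is Lindelöf. By the dichotomy there is a closed $P\subseteq Z$ homeomorphic to $\mathbb{P}$, so $P\times M$ is closed in $Z\times M$ and therefore Lindelöf. But $P\times M\cong \mathbb{P}\times M$ is not Lindelöf by the definition of a Michael space, contradiction. Hence $Z$ is $\sigma$-compact.

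The only non-trivial ingredient is the Hurewicz dichotomy, which is standard and can simply be cited (e.g.\ from Kechris's book on descriptive set theory). The rest is a routine push-forward plus the product trick against a Michael space; I do not expect any real obstacle beyond verifying that $f(X)$ retains both relevant properties, which is immediate.
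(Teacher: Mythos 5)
Your argument is correct and is in substance the same route the paper takes: the paper makes the identical reduction to a separable metrizable continuous image $Z$ (noting, via \cite{Hansell}, that separable metrizable $K$-analytic spaces are analytic, and that continuous images inherit both productive Lindelöfness and $K$-analyticity), and then simply cites the lemma from \cite{TallMenger} that, given a Michael space, productively Lindelöf analytic metrizable spaces are $\sigma$-compact --- which is exactly the lemma you have reconstructed by hand (Hurewicz dichotomy plus the closed-subspace-of-the-product trick). So your write-up differs only in being self-contained where the paper cites. The one point to tighten is the reference for the dichotomy: you need the \emph{relative} form --- a non-$\sigma$-compact analytic space contains a copy of $\mathbb{P}$ that is closed in the space itself (Saint Raymond; see also Kechris) --- because the commonly quoted ambient form for an analytic subset $A$ of a Polish space (``either $A$ is contained in a $K_\sigma$ or $A$ contains a copy of $\mathbb{P}$ closed in the ambient space'') yields nothing once $Z$ is embedded in, say, the Hilbert cube, since an analytic set covered by a $K_\sigma$ need not be $\sigma$-compact. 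With the relative dichotomy cited correctly, the rest of your argument (closedness of $P\times M$ in the Lindelöf space $Z\times M$ against the non-Lindelöfness of $\mathbb{P}\times M$) is exactly right.
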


\begin{proof}
Let $X$ be Lindelöf \v{C}ech-complete, $g$ map $X$ onto $Y$, $Y$ productively
Lindelöf, $f$ map $Y$ onto a separable, metrizable $Z$. Then $Z$ is
$K$-analytic. Then $Z$ is analytic, since $K$-analytic subspaces of separable
metrizable spaces are analytic --- see e.g. Theorems 2.1(f) and 3.1(d) of
\cite{Hansell}. But in \cite{TallMenger} I proved:

\begin{lem}
If there is a Michael space, then productively Lindelöf analytic metrizable
spaces are $\sigma$-compact.
\end{lem}
\end{proof}

There is a Michael space if either $\mathfrak{b} = \aleph_1$ \cite{Lawrence1990}
or $\mathfrak{d} = cov(\mathcal{M})$ \cite{Moore1999}.

\section{Playing with projectively $\sigma$-compact spaces}\label{sec4}

In this section, we assume some acquaintance with topological games as
in \cite{Scheepers2004}. The players will be ONE and TWO, the games
will be of length $\omega$, and \emph{strategies} are perfect information
strategies.

Telgársky \cite{Telgarsky} proved:

\begin{lem}\label{lem9}
A metrizable space is $\sigma$-compact if and only if TWO has a winning strategy
in the Menger game for $X$.
\end{lem}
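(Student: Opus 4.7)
The plan is to prove the two implications separately, with the reverse implication being essentially formal and the forward implication containing all the content.

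For the easy direction ($\sigma$-compact $\Rightarrow$ TWO wins): writing $X = \bigcup_n K_n$ with each $K_n$ compact, TWO's strategy is to respond to ONE's $n$th open cover $\mathcal{U}_n$ by picking a finite $\mathcal{V}_n \subseteq \mathcal{U}_n$ that covers $K_n$. Then $\bigcup_n \bigcup \mathcal{V}_n \supseteq \bigcup_n K_n = X$, so TWO wins. Note that metrizability plays no role here.

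For the hard direction, suppose TWO has a winning strategy $\sigma$ and fix a compatible metric $d$ on $X$. For each $k \in \omega$ let $\mathcal{B}_k$ be the open cover of $X$ by $d$-balls of radius $2^{-k}$, and for each $s \in \omega^{<\omega}$ set $\mathcal{V}_s = \sigma(\mathcal{B}_{s(0)},\ldots,\mathcal{B}_{s(|s|-1)})$ and $U_s = \bigcup_{\emptyset \ne t \preceq s} \bigcup \mathcal{V}_t$. The winning hypothesis then becomes: for every $\alpha \in \omega^\omega$, $\bigcup_n U_{\alpha \restriction n} = X$. The plan is to extract, for each node $s$, a compact set $K_s \subseteq X$ such that $X = \bigcup_{s \in \omega^{<\omega}} K_s$, which immediately gives $\sigma$-compactness. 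The set $K_s$ will be (the closure in $X$ of) a suitable ``newly controlled'' piece at stage $s$, and its compactness is forced by the winning condition via diagonalization: if $K_s$ were not totally bounded, hence contained an infinite $2^{-k}$-separated subset, then extending $s$ with the entry $k$ and choosing subsequent entries cleverly would produce a branch $\alpha$ along which $\sigma$ leaves infinitely many of those points uncovered, contradicting winning.

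The main obstacle is implementing the diagonalization cleanly. Because $\sigma$ depends on ONE's entire history of covers rather than only the last one, at each stage past $|s|$ one must choose an entry that respects every previously designated ``target point'' simultaneously, and one must verify that finitely many $\sigma$-responses can be avoided without fighting the winning condition outright. Metrizability enters essentially both through the metric-ball covers $\mathcal{B}_k$, which reduce the relevant space of ONE's moves to the countable tree $\omega^{<\omega}$, and through the equivalence of total boundedness plus closedness with compactness (after passing to the completion of $X$ if needed). If this direct diagonalization proves cumbersome, a fallback is to invoke Telg\'arsky's general game-comparison machinery, reducing the Menger game on the paracompact space $X$ to his $K$-game in which TWO plays compact subsets of closed sets played by ONE; there, a winning strategy for TWO transparently gives a $\sigma$-compact decomposition.
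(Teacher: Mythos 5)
The paper itself offers no proof of this lemma: it is quoted from Telg\'arsky, with a pointer to Scheepers' more accessible argument, so your proposal has to be judged as a free-standing proof of the standard result. Your easy direction ($\sigma$-compact $\Rightarrow$ TWO wins, no metrizability needed) is correct and is exactly the standard one. The hard direction, however, is not a proof but a plan, and the step you yourself flag as ``the main obstacle'' is precisely the entire mathematical content of the theorem. Two concrete gaps: first, compactness of the pieces. As described, $K_s$ is ``a suitable newly controlled piece at stage $s$,'' but nothing in your setup makes it totally bounded; to get total boundedness you would need to define something like $K_s=\bigcap_{k<\omega}\bigcup\mathcal V_{s^\frown k}$ (points covered by TWO's response no matter which ball-cover ONE plays next), and even then total boundedness plus closedness only yields compactness in a \emph{complete} space. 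A Menger metrizable space need not be completely metrizable, and ``passing to the completion'' only works if you show that the closure of $K_s$ taken in the completion is still contained in $X$ --- that containment is exactly where the winning strategy has to be exploited, and it is the heart of Telg\'arsky's and Scheepers' arguments (and of the Banakh--Zdomskyy refinement to hereditarily Lindel\"of spaces mentioned in the paper). Your sketch never addresses it.

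Second, the diagonalization itself is not carried out. Since $\sigma$ is winning, no single branch $\alpha$ can be ``defeated''; the contradiction has to be organized around a point $x$ assumed to lie outside all the putative compact sets, from which one recursively builds a branch along which $x$ (or a sequence of separated points) escapes every $\bigcup\mathcal V_{\alpha\upharpoonright n}$, and the bookkeeping against a full-information strategy is exactly what you defer. Finally, the proposed fallback --- ``invoke Telg\'arsky's general game-comparison machinery'' --- is circular in this context: that machinery is the source of the very lemma being proved, and citing it is just what the paper already does. So the proposal is fine on the trivial direction but leaves the substantive direction unproved.
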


We defined the Menger game above, in the process of proving Lemma \ref{lem3}.
Scheepers \cite{Scheepers1995}
provided a more accessible proof of Lemma \ref{lem9}, noting that
metrizability was only needed in the proof for the backward implication.

We had conjectured in an earlier version of this note that metrizability was essential, but Banakh and Zdomskyy \cite{BZ} proved that it could be weakened to ``hereditarily Lindel\"{o}f".  We had also asked whether projective $\sigma$-compactness was equivalent for Lindel\"{o}f spaces to TWO having a winning strategy in the Menger game.  It isn't -- see below.  However, we can prove:

\begin{thm}
Suppose there is a winning strategy for TWO in the Menger game for $X$.  Then $X$ is projectively $\sigma$-compact.
\end{thm}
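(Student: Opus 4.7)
The plan is to transfer a winning strategy for TWO in the Menger game on $X$ to a winning strategy for TWO in the Menger game on each continuous image of $X$ inside a separable metrizable space, and then invoke Telgárskys's Lemma~\ref{lem9}.

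Concretely, let $f\colon X \to Z$ be continuous with $Z$ separable metrizable, and set $Y = f(X)$. Since $Y$ is a subspace of a separable metrizable space, $Y$ itself is separable metrizable, so by Lemma~\ref{lem9} it suffices to show that TWO has a winning strategy in the Menger game on $Y$. Let $\sigma$ be a winning strategy for TWO in the Menger game on $X$. Given a play on $Y$ in which ONE has played open covers $\mathcal{U}_0,\ldots,\mathcal{U}_n$ of $Y$, I lift each to the open cover $\mathcal{U}'_k = \{f^{-1}(U) : U \in \mathcal{U}_k\}$ of $X$, then simulate ONE playing $\mathcal{U}'_0,\ldots,\mathcal{U}'_n$ against $\sigma$. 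The resulting finite response $\mathcal{V}'_n \subseteq \mathcal{U}'_n$ is pushed back down to $\mathcal{V}_n = \{U \in \mathcal{U}_n : f^{-1}(U) \in \mathcal{V}'_n\}$, a finite subset of $\mathcal{U}_n$, which we declare to be TWO's move. This defines a (perfect-information) strategy $\sigma'$ for TWO on $Y$.

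To verify $\sigma'$ is winning, consider any infinite play in which TWO follows $\sigma'$. The simulated play on $X$ has TWO following $\sigma$, so $\{\bigcup \mathcal{V}'_n : n<\omega\}$ covers $X$. For any $y \in Y$, pick $x \in X$ with $f(x) = y$; then $x \in f^{-1}(U)$ for some $n$ and some $U \in \mathcal{V}_n$, whence $y = f(x) \in U \subseteq \bigcup \mathcal{V}_n$. Thus $\{\bigcup\mathcal{V}_n : n<\omega\}$ covers $Y$ and $\sigma'$ wins.

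Applying Lemma~\ref{lem9} to the metrizable space $Y$ now yields that $Y = f(X)$ is $\sigma$-compact, so $X$ is projectively $\sigma$-compact. The only real step with any content is checking that the pushed-down finite families really cover $Y$, which is immediate from surjectivity of $f$ onto $Y$; the rest is bookkeeping about passing strategies through continuous maps.
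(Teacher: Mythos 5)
Your proof is correct and is essentially the paper's argument: both transfer TWO's winning strategy from $X$ to the image $Y=f(X)$ by lifting covers of $Y$ through $f^{-1}$ and then invoke Lemma \ref{lem9} for the separable metrizable space $Y$. The only cosmetic difference is that you argue directly while the paper runs the same strategy-transfer inside a proof by contradiction (assuming some image is not $\sigma$-compact).
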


\begin{Proof}
Suppose $X$ is not projectively $\sigma$-compact.
Then there is an $f : X \to Y$ separable metrizable, such that $Y$ is not
$\sigma$-compact. Suppose there were a winning strategy for TWO in the Menger
game on $X$. We can define a strategy for the Menger game on $Y$ by simply
playing, given an open cover $\mathcal{W}$ of $Y$ (and previous information)
the finite subset $\mathcal{W}'$ of $\mathcal{W}$ such that $\{ f^{-1}(W) :
W \in \mathcal{W}' \}$ is the move of TWO for the cover $\{ f^{-1}(W) : W \in
\mathcal{W} \}$ (and the corresponding previous information). Then, since the
$\omega$-sequence of moves for $X$ would yield a cover, their images would
yield a cover of $Y$. Thus a winning strategy for $X$ entails a winning
strategy for $Y$. But $Y$ is not $\sigma$-compact so there is no such winning
strategy for it and hence none for $X$.
\end{Proof}

The diagram in Figure \ref{fig1} below shows the relationships among the
properties we have discussed in this article. A more extensive diagram
with many more Lindelöf properties can be found in
\cite{AurichiTall}, but it does not mention projective
$\sigma$-compactness, which is our main concern here. Examples showing
that implications do not reverse can be found there and below. For
projective $\sigma$-compactness, the relevant examples in addition to
Okunev's are:

\begin{example}
  A Hurewicz space which is not projectively $\sigma$-compact. Simply
  take a Hurewicz set of reals which is not $\sigma$-compact
  \cite{Just1996}. One can even get an example with finite products
  Hurewicz \cite{Tsaban2008}.
\end{example}

\begin{example}\label{ex2}
  A projectively $\sigma$-compact space which is not productively
  Lindelöf, and for which TWO does not have a winning strategy in the Menger game. J. T. Moore \cite{Moore2006} has constructed a hereditarily Lindelöf space
  $X$ such that some finite power of $X$ is not Lindelöf \cite{TZ}, but any continuous real-valued
  function on $X$ has countable range. It follows that any continuous
  function $f$ on $X$ into any separable metrizable space $Y$ has
  countable range. To see this, embed $Y$ in $[0,1]^{\aleph_0}$. If
  $f(X)$ has uncountable projection onto any factor of
  $[0,1]^{\aleph_0}$, we have a contradiction, so $f(X) \subseteq
  \prod_{n<\omega} \pi_n(f(X))$, where each factor is countable and
  hence $0$-dimensional, so $\prod_{n<\omega} \pi_n(f(X))$ is
  $0$-dimensional and hence embeds in a Cantor set included in
  $\mathbb{R}$, so indeed $f(X)$ is countable.  Now according to the Banakh-Zdomskyy result quoted above, since $M$ is hereditarily Lindel\"{o}f, if TWO had a winning strategy in the Menger game, $M$ would be $\sigma$-compact, which it isn't, since it is not productively Lindel\"{o}f.
\end{example}

\begin{example}\label{newex3}
A non-$\sigma$-compact space for which TWO has a winning strategy in the Menger game.  In \cite{Arhangelskii2000}, Arhangel'ski\v{i} gives an example -- due to Okunev -- of a projectively $\sigma$-compact space which is not $\sigma$-compact.  First, one take the Alexandrov duplicate of the space $\mathbb{P}$ of irrationals.  That is, take two copies of $\mathbb{P}$ and make one of them discrete.  A neighbourhood of $p$ in the non-discrete copy is obtained by taking a usual neighbourhood $U$ of $p$ in $\mathbb{P}$ together with $U - \set{p}$ in the discrete copy of $\mathbb{P}$.  Okunev then identifies the non-discrete copy of $\mathbb{P}$ to a point to obtain the desired space $X$, which he proves is Lindel\"{o}f and projectively $\sigma$-compact, but not $\sigma$-compact. Give the first open cover $\mathcal{U}_0$, TWO picks an element $U_0$ of $\mc{U}_0$ containing the unique non-isolated point.  Then $U_0$ is cocountable since $X$ is Lindel\"{o}f.  Now let $X - U_0 = \set{x_n : 0 < n < \omega}$.  Given the $n$th open cover, for $n > 0$, TWO picks an element containing $x_n$.  This strategy clearly wins.
\end{example}

\section{Borel's Conjecture implies Rothberger spaces are Hurewicz}\label{sec5}

As another example of the utility of projective $\sigma$-compactness,
we shall prove:

\begin{thm}\label{thm11}
Assume Borel's Conjecture. Then every Rothberger space is Hurewicz.
\end{thm}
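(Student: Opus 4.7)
The plan is to mimic the structure of the proof of Theorem~\ref{thm2}: use Borel's Conjecture to force projective $\sigma$-compactness of $X$, and then pass to Hurewicz via a Hurewicz analog of Lemma~\ref{lem4}.

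First I would show that every Rothberger space $X$ is projectively countable (in particular projectively $\sigma$-compact) under Borel's Conjecture. The Rothberger property $S_1(\mathcal{O},\mathcal{O})$ passes to continuous images, so any continuous image $f(X)$ in a separable metrizable space $Y$ is itself a Rothberger separable metrizable space. By the classical Fremlin--Miller theorem, such a space has strong measure zero in every compatible metric; Borel's Conjecture then forces $f(X)$ to be countable, hence $\sigma$-compact.

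Next I would invoke the Hurewicz analog of Lemma~\ref{lem4}: projectively $\sigma$-compact Lindel\"of spaces are Hurewicz. The cleanest route is via Hurewicz's own characterization, namely that a Lindel\"of space is Hurewicz if and only if every continuous image in $\omega^\omega$ is $\le^*$-bounded. Since $X$ is projectively countable, every continuous image of $X$ in $\omega^\omega$ is a countable set $\{f_n : n<\omega\}$, and any such set is dominated by its pointwise diagonal maximum $g(k) = \max_{n\le k} f_n(k)$. The characterization then yields that $X$ is Hurewicz.

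The main obstacle I expect is the transition from Borel's Conjecture in its usual form (about subsets of $\mathbb{R}$) to the statement that every separable metrizable strong measure zero space is countable; this requires invoking a known extension (e.g.\ via Carlson's work showing Borel's Conjecture implies strong measure zero sets in any Polish space are countable) rather than the raw statement about $\mathbb{R}$. An alternative route, adapting the argument of Lemma~\ref{lem4} directly, is less pleasant because Engelking's refinement in 5.1.J(e) does not automatically preserve $\gamma$-covers, so matching the $\gamma$-cover bookkeeping to the Hurewicz selection principle would demand extra care; this is why the $\omega^\omega$-bounded-image characterization is the preferable tool.
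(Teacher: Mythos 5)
Your first half is essentially the paper's own argument: Rothberger passes to continuous images, Rothberger subsets of the line have strong measure zero, and Borel's Conjecture makes them countable. The obstacle you flag about extending $BC$ beyond $\mathbb{R}$ can be dispensed with, without Carlson's theorem, exactly as in Example \ref{ex2}: embed the separable metrizable image in $[0,1]^{\aleph_0}$; each coordinate projection is a Rothberger subset of $[0,1]$, hence countable under $BC$; so the image lies in a countable product of countable sets, is zero-dimensional, embeds in a Cantor set in $\mathbb{R}$, and is therefore itself a Rothberger subset of $\mathbb{R}$, hence countable. That is how the paper gets projective countability (so projective $\sigma$-compactness) from $BC$ for subsets of $\mathbb{R}$ alone.

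The genuine gap is your final step. The characterization you invoke --- ``a Lindelöf space is Hurewicz if and only if every continuous image in $\omega^\omega$ is $\le^*$-bounded'' --- is false for general (regular, not necessarily zero-dimensional) spaces, and your $X$ is an arbitrary Rothberger space. Only the forward direction holds in general; the reverse fails badly, because a connected space admits only constant maps into $\omega^\omega$: for instance $\mathbb{R}^\omega$ is Lindelöf and every continuous image of it in $\omega^\omega$ is a single point, yet $\mathbb{R}^\omega$ is not Hurewicz (it is not even Menger, since it contains a closed copy of $\omega^\omega$). Hurewicz's bounded-image theorem is about subsets of $\omega^\omega$, or more generally zero-dimensional Lindelöf spaces, where covers can be refined to clopen covers so that $x \mapsto \min\{m : x \in U^n_m\}$ becomes continuous; without zero-dimensionality no such continuous map into $\omega^\omega$ is available. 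The step you tried to avoid is in fact the right one and is unproblematic: projectively $\sigma$-compact Lindelöf spaces are Hurewicz by the same argument as Lemma \ref{lem4} (this is the modification behind Theorem \ref{thm6} and the diagram). Work with the standard formulation of Hurewicz whose input is an arbitrary sequence of open covers: Engelking 5.1.J(e) gives $f_n : X \to Y_n$ and covers $\mathcal{V}_n$ of $Y_n$ whose preimages refine $\mathcal{U}_n$; the diagonal image $\hat{Y} \subseteq \prod Y_n$ is $\sigma$-compact, hence Hurewicz; choose finite subfamilies whose unions contain each point of $\hat{Y}$ for all but finitely many $n$; taking preimages and then refining into $\mathcal{U}_n$ preserves the ``all but finitely many'' condition pointwise, so no $\gamma$-cover bookkeeping is needed. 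Alternatively, quote the known Hurewicz analogue of the Bonanzinga--Cammaroto--Matveev result cited in Section 3: Lindelöf projectively Hurewicz spaces are Hurewicz.
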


\emph{Rothberger} is a strengthening of \emph{Menger} in that picking
\emph{one} element from each member of the sequence of open covers
suffices to yield a cover.

\begin{defn}
A set of reals $X$ has \emph{strong measure zero} if and only if given any
sequence $\{\varepsilon_n\}_{n<\omega}$, $\varepsilon>0$, $X$ can be covered
by $\{ X_n : n < \omega\}$, each $X_n$ having diameter less than
$\varepsilon_n$.
\end{defn}

Borel \cite{Borel} conjectured that every strong measure zero set is countable.
Laver \cite{Laver} proved the consistency of Borel's Conjecture. Zdomskyy
\cite{Zdomsky} proved that every paracompact Rothberger space is Hurewicz,
assuming $\mathfrak{u} < \mathfrak{g}$. We refer to his paper or to
\cite{Blass} for the
definitions of these cardinals. Scheepers and Tall \cite{ScheepersTall} observed
that paracompactness could be eased to regularity in Zdomskyy's theorem. The
hypothesis of Zdomskyy's theorem is sophisticated and the proof is non-trivial.
Our proof of Theorem \ref{thm11} is very easy:

\begin{proofref}{Theorem}{thm11}
Suppose $X$ is Rothberger. Then so is every continuous image of $X$.
Rothberger subsets of the real line have strong measure zero (see e.g.
\cite{Miller1984}) and by Borel's Conjecture are therefore countable.
By the same argument as for Example \ref{ex2}, $X$ is projectively
$\sigma$-compact. But then it is Hurewicz.
\end{proofref}

Marion Scheepers pointed out to me that Borel's Conjecture does \emph{not}
follow from $\mathfrak{u} < \mathfrak{g}$, since there is a model of Borel's
Conjecture in which $\mathfrak{b} = \aleph_1$, which implies there is an
uncountable set of reals concentrated about a countable set. Such a set is
Rothberger. See \cite{Blass} for reference to such a model.

Call a space \emph{projectively countable} if its continuous image in any
separable metrizable space is countable. We then have:

\begin{thm}[\cite{BonCamMat}]
Borel's Conjecture implies a space is Rothberger if and only if it is
Lindelöf and projectively countable.
\end{thm}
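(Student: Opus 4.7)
Rothberger is a strengthening of Menger, which implies Lindelöf, so the forward implication automatically supplies Lindelöfness. What actually needs to be shown is (i) under Borel's Conjecture every Rothberger space is projectively countable, and (ii) in ZFC, every Lindelöf projectively countable space is Rothberger. These are the Rothberger analogues of Theorem \ref{thm11} and Lemma \ref{lem4} respectively.

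For (i) the plan is simply to reuse the argument from Example \ref{ex2} and the proof of Theorem \ref{thm11}. Given continuous $f : X \to Y$ with $Y$ separable metrizable, embed $Y$ in $[0,1]^{\aleph_0}$. Each composition $\pi_n \circ f$ maps $X$ continuously to a Rothberger subspace of $[0,1]$, hence of strong measure zero by \cite{Miller1984}, hence countable by Borel's Conjecture. Thus $f(X) \subseteq \prod_{n<\omega} \pi_n(f(X))$, a countable product of countable zero-dimensional metric spaces, itself zero-dimensional separable metrizable and therefore embeddable in a Cantor set inside $\mathbb{R}$. A second application of ``Rothberger implies strong measure zero'' plus Borel's Conjecture then collapses $f(X)$ to a countable set, which is projective countability.

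For (ii) the plan is to run the proof of Lemma \ref{lem4} with ``Rothberger'' replacing ``Menger'' and ``countable'' replacing ``$\sigma$-compact''. Given a sequence $\{\mathcal{U}_n\}_{n<\omega}$ of open covers of a Lindelöf projectively countable $X$, apply Engelking's 5.1.J(e) (which needs only Lindelöfness) to obtain continuous $f_n : X \to Y_n$ with $Y_n$ separable metrizable and open covers $\mathcal{V}_n$ of $Y_n$ whose preimages refine $\mathcal{U}_n$; diagonalize to get $f : X \to \hat{Y} \subseteq \prod_n Y_n$. Projective countability forces $\hat{Y}$ to be countable; enumerating $\hat{Y} = \{y_k\}_{k<\omega}$, choose $V_n \in \mathcal{V}_n$ with $\pi_n(y_n) \in V_n$ and then $U_n \in \mathcal{U}_n$ with $f_n^{-1}(V_n) \subseteq U_n$. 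For any $x \in X$ we have $f(x) = y_k$ for some $k$, whence $x \in f_k^{-1}(V_k) \subseteq U_k$, so $\{U_n\}$ is the required Rothberger selection.

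The only mildly delicate point is the ``countable metric $\Rightarrow$ zero-dimensional $\Rightarrow$ embeds in $\mathbb{R}$'' step in (i); this is exactly what enables the second invocation of Borel's Conjecture to reduce $f(X)$ itself, and not merely its one-dimensional projections, to a countable set. Once that is granted, both directions are essentially bookkeeping on top of results already used in the paper.
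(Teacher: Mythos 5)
Your proposal is correct and follows essentially the same route as the paper: the forward direction is exactly the paper's argument (continuous images of Rothberger spaces are Rothberger, Rothberger sets of reals have strong measure zero hence are countable under Borel's Conjecture, and the embedding-into-a-Cantor-set trick of Example \ref{ex2} upgrades this to projective countability), while your converse is just an explicit write-out of the ``usual technique'' the paper delegates to the citation of Ko\v{c}inac, namely the Engelking 5.1.J(e) pullback showing Lindelöf projectively Rothberger (here, projectively countable) spaces are Rothberger. Both steps check out, including the selection $U_k \supseteq f_k^{-1}(V_k)$ with $\pi_k(y_k) \in V_k$, so no gaps.
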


\begin{proof}
That Rothberger implies Lindelöf is obvious. We have already proved that $BC$
implies Rothberger spaces are projectively countable. The converse is proved
by the usual technique; indeed Lindelöf projectively Rothberger spaces are
Rothberger \cite{Kocinac}.
\end{proof}

\section{Implications and not}\label{sec6}

\begin{defn}[\cite{Alster1988}, \cite{Barr2007a}]
A space $X$ is \emph{Alster} if every cover $\mathcal{G}$ by $G_\delta$'s
has a countable subcover, provided that for each compact subset $K$ of $X$,
some finite subset of $\mathcal{G}$ covers $K$.
\end{defn}

Alster spaces are important in the study of Michael's problems, since they
are both productively Lindelöf and powerfully Lindelöf \cite{Alster1988}.
$\sigma$-compact spaces are Alster, but not necessarily vice versa
\cite{Alster1988}, \cite{Barr2007a}. It is not known if productively
Lindelöf spaces are Alster or even powerfully Lindelöf ---see
\cite{Alster1988}, \cite{AurichiTall}, \cite{TallNote}. Alster spaces
in which compact sets are $G_\delta$'s are $\sigma$-compact, so powerfully
Lindelöf spaces need not be Alster. $\sigma$-compact spaces as well as
Lindelöf \emph{$P$-spaces} ($G_\delta$'s are open) are Alster
\cite{Alster1988}, \cite{Barr2007a}. Alster spaces are projectively
$\sigma$-compact, but even projective countability is insufficient to imply
Alster. To see this, note that Moore's $L$-space $M$ is projectively countable
but not Alster, since some finite power of $M$ is not Lindelöf \cite{TZ}. \qed

\begin{pbm}
Does Alster imply TWO has a winning strategy in the Menger game?
Is the converse true?
\end{pbm}

We have proved or given references already for almost all of the
non-obvious implications in the diagram below. That ``indestructibly
productively Lindelöf'' implies ``powerfully Lindelöf'' is in
\cite{TallNote}. To see that Lindelöf $P$-spaces are projectively
countable, observe that if $X$ is $P$ and $Y$ has points $G_\delta$
and $f: X \to Y$, then the inverse images of points in $Y$ form a
disjoint open cover of $X$.

Moore's $L$-space is projectively countable but not Alster nor $P$ since closed
subsets are $G_\delta$'s. As mentioned, it is neither productively Lindelöf
nor powerfully Lindelöf. $2^{\omega_1}$ is compact, but it is not $P$ and is
not indestructibly productively Lindelöf \cite{AurichiTall}. A Bernstein
(totally imperfect) set of reals is powerfully Lindelöf but not productively
Lindelöf \cite{Michael1971}. See \cite{Just1996}, \cite{Tsaban2008} for
examples of sets of
reals which are Menger, but not Hurewicz, and Hurewicz but not
(projectively) $\sigma$-compact. The space of irrationals is Lindelöf $D$ but
not Menger. It is consistent that there are Rothberger spaces that are not
Hurewicz. See the discussion in Section 3 of \cite{ScheepersTall}.

\begin{landscape}
\thispagestyle{empty}
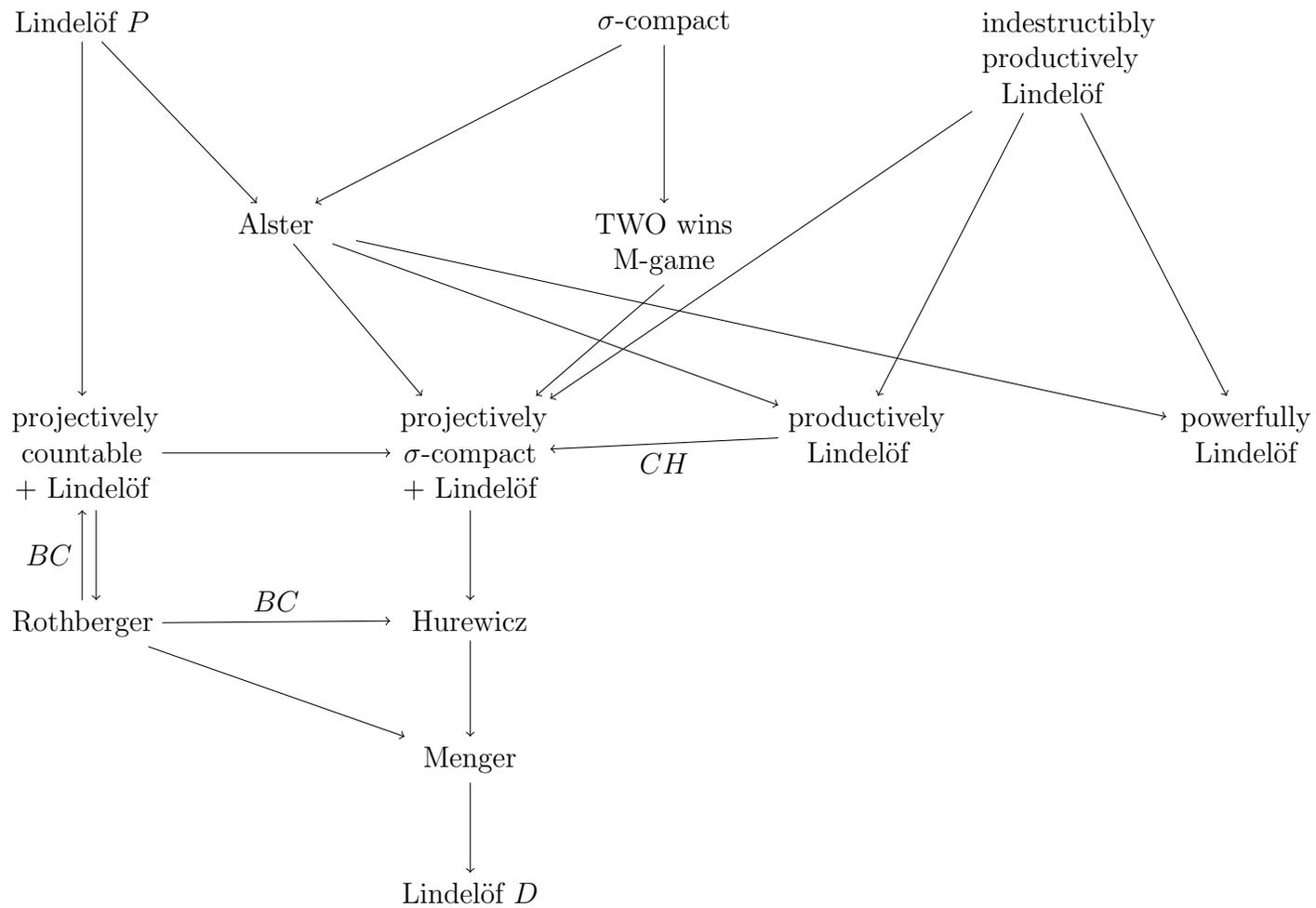
\begin{figure}
\begin{tikzpicture}
\matrix (m) [matrix of nodes,
             row sep = 13mm,
             column sep = 5mm,
             nodes={text width=2cm, text centered}]
{Lindelöf $P$ & & & $\sigma$-compact & &
indestructibly productively Lindelöf & \\
 & Alster & & TWO wins M-game & & & \\[3mm]
projectively countable + Lindelöf & & projectively $\sigma$-compact
+ Lindelöf & & productively Lindelöf & & powerfully Lindelöf \\
Rothberger & & Hurewicz & & & & \\
 & & Menger & & & & \\
 & & Lindelöf $D$ & & & & \\};
\path[->]
(m-1-1) edge (m-2-2)
        edge (m-3-1)
(m-1-4) edge (m-2-2)
        edge (m-2-4)
(m-1-6) edge (m-3-5)
        edge (m-3-7)
        edge[bend left=0] (m-3-3)
(m-2-2) edge (m-3-3)
        edge (m-3-5)
        edge[bend right=0] (m-3-7)
(m-2-4.south) edge (m-3-3)
(m-3-1) edge (m-3-3)
(m-3-5) edge node[auto]{$CH$} (m-3-3)
(m-4-1) edge node[auto]{$BC$} (m-3-1)
        edge node[auto]{$BC$} (m-4-3)
        edge (m-5-3)
(m-3-3) edge (m-4-3)
(m-4-3) edge (m-5-3)
(m-5-3) edge (m-6-3);
\path (m-3-1.south) ++ (0.2,0) coordinate (P);
\path (m-4-1.north) ++ (0.2,0) coordinate (Q);
\path[->]  (P) edge (Q);
\end{tikzpicture}
\caption{The relationships among various properties discussed.}
\label{fig1}
\end{figure}
\end{landscape}

\bibliographystyle{acm}
\bibliography{prodlindmay}

\noindent
{\rm Franklin D. Tall\\
Department of Mathematics\\
University of Toronto\\
Toronto, Ontario\\
M5S 2E4\\
CANADA\\}
\noindent
{\it e-mail address:} {\rm f.tall@utoronto.ca}

\end{document}